\newtheorem{definition}{Definition}[section]
\newtheorem{theorem}{Theorem}
\newtheorem{remark}{Remark}[section]
\numberwithin{equation}{section}
\begin{document}

\title[Extended Beta function]
{Extended Beta, Hypergeometric and confluent Hypergeometric functions}

\author[{\bf N. U. Khan, M. Aman and T. Usman }]{\bf Nabiullah Khan, Mohd Aman and Talha Usman}

\address{Nabiullah Khan: Department of Applied
Mathematics, Faculty of Engineering and Technology,
      Aligarh Muslim University, Aligarh 202002, India}
 \email{nukhanmath@gmail.com}

\bigskip
\address{Talha Usman: Department of Applied
	Mathematics, Faculty of Engineering and Technology,
	Aligarh Muslim University, Aligarh 202002, India}
 \email{talhausman.maths@gmail.com}

\bigskip
\address{Mohd Aman: Department of Applied Mathematics, Faculty of Engineering and Technology, Aligarh Muslim University, Aligarh 202002, India}
 \email{mohdaman.maths@gmail.com}

\keywords{Gamma function, Beta function, Hypergeometric function, Confluent hypergeometric function, Mellin transformation, Beta Distribution}

\subjclass[2010]{11B68,33B15, 33C05,  33C10, 33C15, 33C45, 33E20}

\begin{abstract}  We aim to introduce a new extension of beta function and to study its important properties.~Using this definition, we introduce and investigate new extended hypergeometric and confluent hypergeometric functions. Further, some hybrid representations of this extended beta function are derived which include some well known special functions and polynomials.
\end{abstract}

\maketitle

\section{Introduction and preliminaries} \label{sec1}
Extending well known special functions have been an interesting sphere of research and several interesting extensions and generalizations for beta, hypergeometric and other functions can be found in literature [\cite{and-roy}-\cite{ozerg-altin},\cite{jabee-choi,sri-karls}] due to their tremendous applications. Following up with the investigation, we define here, a new extension of beta function and derive its integral representations, summation formula, mellin transform and some other relations. Further, we obtain beta distribution and some statistical formulas. Finally, using our definition of extended beta function $B_{p,q}^{\lambda}(\eta_{1},\eta_{2})$, we extend the defitions of hypergeometric and confluent hypergeometric functions. In the last section, we obtain some interesting connections of extended beta function with other special functions and polynomials as application of our results.\\
Throughout the paper, the letters $\mathbb{C}$, $\mathbb{R}$, $\mathbb{R}^+$ and $\mathbb{Z}_{0}^{-}$ denote the sets of complex numbers, real numbers, positive real numbers and non positive integers respectively, and let  $\mathbb{R}^+_{0}:= \mathbb{R} \cup \{0\}$.~The definitions given
below are crucial to derive results in the paper.

\vspace{.25cm}
\begin{definition}
As is well known, the gamma function $\Gamma(z)$ developed by Euler \cite{and-roy} with the intent to extend the factorials to values between the integers is defined by the definite integral
\begin{equation}\label{gamma}
\Gamma(z):=\int_{0}^{\infty}\,e^{-t}t^{z-1}\,dt\quad(\Re(z)>0).
\end{equation}
Among various generalizations of gamma function, we mention here the extended gamma function \cite{chaudh-zub} defined by Chaudhry and Zubair
\begin{equation}\label{egamma}
\Gamma_{p}(z)=:\int_{0}^{\infty}\,t^{z-1}\,exp\left(-t-\frac{p}{t}\right)\,dt\quad(\Re(p)>0).
\end{equation}
\end{definition}
\begin{definition}
Euler introduced the beta function (see \cite{and-roy}) for a pair of complex numbers $\eta_{1}$ and $\eta_{2}$ with positive real part through the integral
\begin{equation}\label{beta}
\aligned
B(\eta_{1},\eta_{2})=& \int_{0}^{1}\,t^{\eta_{1}-1}\,(1-t)^{\eta_{2}-1}dt\quad (\Re(\eta_{1})>0\,,\,\Re(\eta_{2})>0), \\
=&\frac{\Gamma(\eta_{1})\Gamma(\eta_{2})}{\Gamma(\eta_{1}+\eta_{2})}=\frac{(\eta_{1}-1)!\,(\eta_{2}-1)!}{(\eta_{1}+\eta_{2}-1)!} \quad (\eta_{1},\eta_{2}\notin \mathbb{Z}_{0}^{-}).
\endaligned
\end{equation}

\vspace{.25cm}
In 1997, Chaudhry et al. \cite{chaudh-raf} defined the extended beta function
\begin{equation}\label{expbeta}
B_{p}^{\lambda}(\eta_{1},\eta_{2})= \int_{0}^{1}\,t^{\eta_{1}-1}\,(1-t)^{\eta_{2}-1}\,\exp\left(-\frac{p}{t(1-t)}\right)dt, \quad (\Re(\eta_{1})>0\,,\,\Re(\eta_{2})>0)
\end{equation}

where $\Re(p)>0$ and parameters $\eta_{1}$ and $\eta_{2}$ are arbitray complex numbers. 

\vspace{.25cm} 
Shadab et al. \cite{jabee-choi} introduced
\begin{equation}\label{ebeta}
B_{p}^{\lambda}(\eta_{1},\eta_{2})= \int_{0}^{1}\,t^{\eta_{1}-1}\,(1-t)^{\eta_{2}-1}\,E_{\lambda}\left(-\frac{p}{t(1-t)}\right)dt \quad (\Re(\eta_{1})>0\,,\,\Re(\eta_{2})>0),
\end{equation}
where $E_{\lambda}(.)$ is the classical Mittag-Leffler function defined as.
\begin{equation}\label{mittag}
E_{\lambda}\left(x\right)=\sum_{n=0}^{\infty}\,\frac{x^{n}}{\Gamma(\lambda n+1)}.
\end{equation}
Note that, by putting $\lambda=1$, the above definition corresponds to the extended beta function \cite{chaudh-raf} and on putting $\lambda=1$ and $p=0$, we get the basic beta function given by \eqref{beta}.
\end{definition}

\vspace{.25cm}
\begin{definition}
The classical Gauss's hypergeometric function is defined by 
\begin{equation}\label{a3}
{}_2F_{1}{\left[\begin{array}{cccc}a,&b;\\c;\\\end{array}z\right]}=\sum_{n=0}^{\infty}\,\frac{(a)_{n}\,(b)_{n}}{(c)_{n}}\,\frac{z^{n}}{n!}={}_2F_{1}(a,b;c;\,z),
\end{equation}
where $(a)_{n}~ (a \in \mathbb{C})$ is the well known Pochhammer symbol.

\vspace{.15cm}
\noindent It is a particular case of the generalized hypergeometric series ${}_pF_{q}~(p,q \in \mathbb{N}_{0})$ defined by
\begin{equation*}
{}_pF_{q}{\left[\begin{array}{cccc}a_{1},\ldots,a_{p};\\b_{1},\ldots,b
_{q};\\\end{array}z\right]}=\sum_{n=0}^{\infty}\,\frac{(a_{1})_{n}\cdots
(a_{p})_{n}}{(b_{1})_{n}\cdots(b_{q})_{n}}\,\frac{z^{n}}{n!}.
 \end{equation*}
\vspace{.25cm}
The confluent hypergeometric function (see \cite{and-roy}) is given by the series representation
\begin{equation}\label{a4}
{}_1F_{1}(a;b;~z)
=\sum_{n=0}^{\infty}\,\frac{(a)_{n}}{(b)_{n}}\,\frac{z^{n}}{n!}.
\end{equation}

The extended hypergeometric and confluent hypergeometric functions \cite{chaudh-paris} are defined respectively by
\begin{equation}\label{ehyper}
F_{p}(\eta_{1},\eta_{2},\eta_{3};z)=\sum_{n=0}^{\infty}\,\frac{B_{p}(\eta_{2}+n\eta_{3}-\eta_{2})}{B(\eta_{2},\eta_{3}-\eta_{2})}\,\frac{z^{n}}{n!},
\end{equation}

\begin{equation*}
(p\geq0,\,\Re(\eta_{3})>\Re(\eta_{2})>0 ~and ~|z|<1)
\end{equation*}
and
\begin{equation}\label{echyper}
\Phi_{p}(\eta_{2};\eta_{3};z)=\sum_{n=0}^{\infty}\, \frac{B_{p}(\eta_{2}+n\eta_{3}-\eta_{2})}{B(\eta_{2},\eta_{3}-\eta_{2})}\,(\eta_{1})_{n}\,\frac{z^{n}}{n!},
\end{equation}
\begin{equation*}
(p\geq0 ~and ~\,\Re(\eta_{3})>\Re(\eta_{2})>0).
\end{equation*}

\vspace{.25cm}
We use the relation for Mittag-Leffler function to derive results in the paper. We have
\begin{equation}\label{rel-mittag}
\int_{0}^{\infty}\,t^{a-1}\,E_{\lambda,\gamma}^{\delta}(-wt)dt=\frac{\Gamma(a)\Gamma(\delta-a)}{\Gamma(\delta)\,w^{a}\,\Gamma(\gamma-a\lambda)},
\end{equation}
which, for $\gamma=\delta=w=1$, becomes
\begin{equation*}
\int_{0}^{\infty}\,t^{a-1}\,E_{\lambda,\gamma}^{\delta}(-wt)dt=\frac{\Gamma(a)\Gamma(1-a)}{\Gamma(1-a\lambda)}.
\end{equation*}

\end{definition}
\vspace{.35cm}
\section{A new extended Beta function and its representations}
Here, we introduce a new generalization of the extended beta function $B_{p}^{\lambda}(\eta_{1},\eta_{2})$ in \eqref{ebeta} and obtain its various properties and representations.
\begin{definition}
We define a new extension of beta function as
\begin{equation}\label{nbeta}
B_{p,q}^{\lambda;\sigma,\tau}(\eta_{1},\eta_{2})=\int_{0}^{1}\,t^{\eta_{1}-1}\,(1-t)^{\eta_{2}-1}\,E_{\lambda}\left(-\frac{p}{t^{\sigma}}\right)\,E_{\lambda}\left(-\frac{q}{(1-t)^{\tau}}\right)\,dt,
\end{equation}
\begin{equation*}
where~\Re(\eta_{1})>0,~\Re(\eta_{2})>0,~\Re(p)\geq0,~\Re(q)\geq0,~\lambda,k>0~ \text{and} ~E_{\lambda} ~\text{is a Mittag-Leffler function}.
\end{equation*}
\end{definition}
\begin{remark}
For $\sigma=\tau =\lambda=1$, \eqref{nbeta} reduce to definition \cite{choi-parmar}. For $\lambda=1$, $p=q$ and $\sigma=\tau$, \eqref{nbeta} reduce to the extended beta function \cite{lee} and for $\sigma=\tau =\lambda=1$, $p=0=q$, it reduces to the classical beta function given by \eqref{beta}.
\end{remark}

\vspace{.35cm}
\begin{center}
Integral Representations of $B_{p,q}^{\lambda;\sigma,\tau}(\eta_{1},\eta_{2})$:\end{center}
\begin{theorem}\label{2t.1}
For $\Re(p)>0,~\Re(q)>0,~\Re(r)>0,~\Re(s)>0,~\Re(\eta_{1}+r)>0,~\Re(\eta_{2}+s)>0~and~\lambda>0$,~the following integral representation holds:
\begin{equation}\label{2.2}
\aligned
\int_{0}^{\infty}\,\int_{0}^{\infty}&~p^{r-1}\,q^{s-1}~B_{p,q}^{\lambda;\sigma,\tau}(\eta_{1},\eta_{2})\,dp\,dq\\
=&\frac{\pi^{2}}{\sin(\pi r)\sin(\pi s)\Gamma(1-r\lambda)\Gamma(1-s\lambda)}\,B(x+\sigma r,y+\tau s)
\endaligned
\end{equation}
\end{theorem}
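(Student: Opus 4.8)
The plan is to unravel the definition \eqref{nbeta}, interchange the order of the three resulting integrations, and evaluate the two ``outer'' integrals in $p$ and $q$ using the Mellin-type formula \eqref{rel-mittag} for the Mittag-Leffler function. Substituting \eqref{nbeta} into the left-hand side of \eqref{2.2} gives a triple integral over $(t,p,q)\in(0,1)\times(0,\infty)^2$ with integrand
\[
t^{\eta_{1}-1}(1-t)^{\eta_{2}-1}\,p^{r-1}q^{s-1}\,E_{\lambda}\!\left(-\frac{p}{t^{\sigma}}\right)E_{\lambda}\!\left(-\frac{q}{(1-t)^{\tau}}\right).
\]
First I would justify applying Fubini's theorem: under the hypotheses $\Re(p),\Re(q),\Re(r),\Re(s)>0$, $\Re(\eta_1+r)>0$, $\Re(\eta_2+s)>0$, $\lambda>0$, the integrand is absolutely integrable (the Mittag-Leffler factors decay and the power singularities at $t=0,1$ and $p,q=0$ are integrable), so the order of integration may be freely exchanged, and one may integrate in $p$ and $q$ first, holding $t$ fixed.

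Next I would carry out the $p$-integral. Using \eqref{rel-mittag} with $\gamma=\delta=1$ (so that $E_{\lambda,1}^{1}=E_{\lambda}$), $a=r$, and $w=t^{-\sigma}$, one gets
\[
\int_{0}^{\infty} p^{r-1}\,E_{\lambda}\!\left(-\frac{p}{t^{\sigma}}\right)dp=\frac{\Gamma(r)\Gamma(1-r)}{\Gamma(1-r\lambda)}\,t^{\sigma r},
\]
and, by the same computation with $a=s$, $w=(1-t)^{-\tau}$,
\[
\int_{0}^{\infty} q^{s-1}\,E_{\lambda}\!\left(-\frac{q}{(1-t)^{\tau}}\right)dq=\frac{\Gamma(s)\Gamma(1-s)}{\Gamma(1-s\lambda)}\,(1-t)^{\tau s}.
\]
Applying the Euler reflection formula $\Gamma(r)\Gamma(1-r)=\pi/\sin(\pi r)$ and $\Gamma(s)\Gamma(1-s)=\pi/\sin(\pi s)$ converts these into the trigonometric form appearing in the statement.

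Collecting the constants and the $t$-dependent powers, the remaining single integral is
\[
\frac{\pi^{2}}{\sin(\pi r)\sin(\pi s)\Gamma(1-r\lambda)\Gamma(1-s\lambda)}\int_{0}^{1} t^{(\eta_{1}+\sigma r)-1}(1-t)^{(\eta_{2}+\tau s)-1}\,dt,
\]
and the last integral is exactly $B(\eta_{1}+\sigma r,\eta_{2}+\tau s)$ by the definition \eqref{beta}, valid since $\Re(\eta_1+\sigma r)>0$ and $\Re(\eta_2+\tau s)>0$ (which follow from $\Re(\eta_1+r)>0$, $\Re(\eta_2+s)>0$ together with $\sigma,\tau>0$; here $x,y$ in \eqref{2.2} should be read as $\eta_1,\eta_2$). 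This yields \eqref{2.2}. The only genuinely delicate point is the Fubini justification and making sure the parameter restrictions guarantee convergence of the two Mellin integrals simultaneously with the beta integral; the rest is a direct substitution into \eqref{rel-mittag}.
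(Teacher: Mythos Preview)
Your proposal is correct and follows essentially the same route as the paper's own proof: substitute the definition \eqref{nbeta}, interchange the order of integration, evaluate the $p$- and $q$-integrals via \eqref{rel-mittag}, apply Euler's reflection formula, and identify the remaining $t$-integral as the Beta function. You are also right that the $x,y$ in \eqref{2.2} are typos for $\eta_1,\eta_2$; your justification of the Fubini step and parameter conditions is in fact more careful than the paper's.
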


\vspace{.25cm}
\begin{proof}
The proof of \eqref{2.2} is easily derivable by multiplying both sides of \eqref{nbeta} by $p^{r-1}\,q^{s-1}$ and integrating the resulting identity with respect to $p$ and $q$~$(p\geq0,~q<\infty)$.~We have,
\begin{equation}\label{2.3}
\aligned
\int_{0}^{\infty}\,\int_{0}^{\infty}~p^{r-1}\,q^{s-1}~&B_{p,q}^{\lambda;\sigma,\tau}(\eta_{1},\eta_{2})dp\,dq
=\int_{0}^{\infty}\,\int_{0}^{\infty}~p^{r-1}\,q^{s-1}\\
\times &\left[\int_{0}^{1}\,t^{\eta_{1}-1}\,(1-t)^{\eta_{2}-1}\,E_{\lambda}\left(-\frac{p}{t^{\sigma}}\right)\,E_{\lambda}\left(-\frac{q}{(1-t)^{\tau}}\right)\,dt\right]dp\,dq
\endaligned
\end{equation}
The uniform convergence in \eqref{2.3} guarentees that the order of the integrals can be interchanged. So we have
\begin{equation*}
\aligned
&\int_{0}^{\infty}\,\int_{0}^{\infty}~p^{r-1}\,q^{s-1}~B_{p,q}^{\lambda;\sigma,\tau}(\eta_{1},\eta_{2})dp\,dq\\
&=\int_{0}^{1}\,t^{\eta_{1}-1}\,(1-t)^{\eta_{2}-1}\,\left\lbrace\int_{0}^{\infty}\,p^{r-1}E_{\lambda}\left(-\frac{p}{t^{\sigma}}\right)dp.\int_{0}^{\infty}~\,q^{s-1}~E_{\lambda}\left(-\frac{q}{(1-t)^{\tau}}\right)dq\right\rbrace\,dt
\endaligned
\end{equation*}
Now the above equation can easily be simplified with the help of \eqref{rel-mittag} to yield
\begin{equation}
\aligned
\int_{0}^{\infty}\,&\int_{0}^{\infty}~p^{r-1}\,q^{s-1}~B_{p,q}^{\lambda;\sigma,\tau}(\eta_{1},\eta_{2})dp\,dq\\
&=\int_{0}^{1}\,t^{\eta_{1}+r \sigma-1}\,(1-t)^{\eta_{2}+s \tau-1}\,\left\lbrace\frac{\Gamma(r)\Gamma(s)\,\Gamma(1-r)\Gamma(1-s)}{\Gamma(1-r\lambda)\Gamma(1-s\lambda)}\right\rbrace dt,
\endaligned
\end{equation}
which on using the relation $\Gamma(x)\Gamma(1-x)=\frac{\pi}{\sin\pi x}$ leads to the result \eqref{2.2}.
\end{proof}
\begin{remark}
For $r=s=\sigma=\tau=1$, we get an interesting relationship between the ordinary Beta function and \eqref{nbeta}.~
We have,
\begin{equation*}
\int_{0}^{\infty}\,\int_{0}^{\infty}~B_{p,q}^{\lambda}(\eta_{1},\eta_{2})\,dp\,dq=B(\eta_{1}+1,\eta_{2}+1)\quad(\Re(\eta_{1})>-1,~\Re(\eta_{2})>-1).
\end{equation*}
\end{remark}

\vspace{.25cm}
\begin{theorem}\label{2t.2}
The following representations are true:
\begin{equation}\label{2.5}
 B_{p,q}^{\lambda;\sigma,\tau}(\eta_{1},\eta_{2})=2\int_{0}^{\frac{\pi}{2}}\,Cos^{2\eta_{1}-1}\theta\,Sin^{2\eta_{2}-1}\,\theta\,E_{\lambda}\left(-\frac{p}{Cos^{2\sigma}\,\theta}\right)E_{\lambda}\left(-\frac{q}{Cos^{2\tau}\,\theta}\right)\,d\theta,
\end{equation}
\begin{equation}\label{2.6}
B_{p,q}^{\lambda;\sigma,\tau}(\eta_{1},\eta_{2})=\int_{0}^{\infty}\,\frac{u^{\eta_{1}-1}}{(1+u)^{\eta_{1}+\eta_{2}}}E_{\lambda}\left(-\frac{p(1+u)^{\sigma}}{u^{\sigma}}\right)\,E_{\lambda}\left(-\frac{q}{(1+u)^{\tau}}\right)\,du,
\end{equation}

\begin{equation}\label{2.7}
\aligned
B_{p,q}^{\lambda;\sigma,\tau}&(\eta_{1},\eta_{2})=2^{1-\eta_{1}-\eta_{2}}\hspace{8.7cm}\\
\times& \int_{-1}^{1}\,(1+u)^{\eta_{1}-1}\,(1-u)^{\eta_{2}-1}E_{\lambda}\left(-\frac{2^{\sigma}p}{(1+u)^{\sigma}}\right)\,E_{\lambda}\left(-\frac{2^{\tau}q}{(1-u)^{\tau}
}\right)\,du,
\endaligned
\end{equation}
and
\begin{equation}\label{2.8}
\aligned
B_{p,q}^{\lambda;\sigma,\tau}&(\eta_{1},\eta_{2})=(c-a)^{1-\eta_{1}-\eta_{2}}\hspace{7.8cm}\\
\times& \int_{a}^{c}\,(u-a)^{\eta_{1}-1}(c-u)^{\eta_{2}-1}E_{\lambda}\left(-\frac{p(c-a)^{\sigma}}{(u-a)^{\sigma}}\right)\,E_{\lambda}\left(-\frac{q(c-a)^{\tau}}{(c-u)^{\tau}}\right)\,du.
\endaligned
\end{equation}

\vspace{.25cm}
\begin{equation*}
(\Re(p)>0,~\Re(q)>0;~p\geq0,~q\geq0;~\lambda\geq0;~\Re(\eta_{1})>0,~\Re(\eta_{2})>0).
\end{equation*}
\end{theorem}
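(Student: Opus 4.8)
The plan is to obtain all four representations \eqref{2.5}--\eqref{2.8} directly from the defining integral \eqref{nbeta} by suitable one-to-one changes of variable on $(0,1)$; no new analytic input is needed beyond the fact that, since $\lambda>0$, the Mittag-Leffler function $E_{\lambda}$ is entire, so each integrand stays integrable after substitution and the manipulations are legitimate. In each case I would write down the substitution, transform the limits, collect the power factors together with the Jacobian, and rewrite the two arguments of the Mittag-Leffler factors.

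For \eqref{2.5} I would set $t=\cos^{2}\theta$, whence $1-t=\sin^{2}\theta$, $dt=-2\cos\theta\,\sin\theta\,d\theta$, and the endpoints $t=0,1$ become $\theta=\pi/2,0$. Collecting powers gives $t^{\eta_{1}-1}(1-t)^{\eta_{2}-1}\,dt=2\cos^{2\eta_{1}-1}\theta\,\sin^{2\eta_{2}-1}\theta\,d\theta$, while $p/t^{\sigma}=p/\cos^{2\sigma}\theta$ and $q/(1-t)^{\tau}=q/\sin^{2\tau}\theta$, which yields \eqref{2.5}. For \eqref{2.6} I would use $t=u/(1+u)$, so that $1-t=1/(1+u)$, $dt=du/(1+u)^{2}$, and $t\in(0,1)$ corresponds to $u\in(0,\infty)$; then $t^{\eta_{1}-1}(1-t)^{\eta_{2}-1}\,dt=u^{\eta_{1}-1}(1+u)^{-\eta_{1}-\eta_{2}}\,du$, and substituting $p/t^{\sigma}=p(1+u)^{\sigma}/u^{\sigma}$ and $q/(1-t)^{\tau}=q(1+u)^{\tau}$ produces \eqref{2.6}.

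The remaining two are linear substitutions. For \eqref{2.8}, assuming $a<c$, I would put $t=(u-a)/(c-a)$, so $1-t=(c-u)/(c-a)$, $dt=du/(c-a)$, and $t\in(0,1)$ corresponds to $u\in(a,c)$; the Jacobian together with the two power factors combine to give the prefactor $(c-a)^{1-\eta_{1}-\eta_{2}}$, while $p/t^{\sigma}=p(c-a)^{\sigma}/(u-a)^{\sigma}$ and $q/(1-t)^{\tau}=q(c-a)^{\tau}/(c-u)^{\tau}$, which gives \eqref{2.8}. Representation \eqref{2.7} is then simply the special case $a=-1$, $c=1$ of \eqref{2.8}; alternatively one applies the substitution $t=(1+u)/2$ directly, in which case the constant assembles as $2^{1-\eta_{1}}2^{1-\eta_{2}}2^{-1}=2^{1-\eta_{1}-\eta_{2}}$.

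The only thing demanding care is the bookkeeping of exponents and constant factors, together with checking that each substitution is a monotone bijection of the open interval onto the stated range, so that the integral transforms without boundary contributions. There is no genuine obstacle: the parameter restrictions $\Re(\eta_{1})>0$, $\Re(\eta_{2})>0$, $\Re(p),\Re(q)>0$ guarantee absolute convergence both before and after each change of variable, so no passage to a limit or interchange argument is required.
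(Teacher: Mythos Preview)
Your proof is correct and follows exactly the same route as the paper's: the paper's proof simply lists the four substitutions $t=\cos^{2}\theta$, $t=u/(1+u)$, $t=(1+u)/2$, $t=(u-a)/(c-a)$ in \eqref{nbeta}, and you have carried these out in full detail. Your careful computation in fact yields $q/\sin^{2\tau}\theta$ in \eqref{2.5} and $q(1+u)^{\tau}$ in \eqref{2.6}, which are the correct expressions (the printed $\cos^{2\tau}\theta$ and $q/(1+u)^{\tau}$ are evidently typographical slips in the statement).
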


\vspace{.25cm}
\begin{proof}
Let $t=Cos^{2}\,\theta$,~$t=\frac{u}{1+u}$,~$t=\frac{1+u}{2}$,~$t=\frac{u-a}{c-a}$ respectively in equations \eqref{nbeta}, we obtain the above representations.
\end{proof}

\vspace{.25cm}
\begin{remark}
The above results retrieve the corresponding representaions in \cite{choi-parmar} and \cite{lee} by taking $\sigma=\tau=\lambda=1$ and $p=q$, $\lambda=1$, $\sigma=\tau$ respectively. Further for $p=0=q$, $\sigma=\tau=\lambda=1$, the results reduces to some well known results for the beta function $B(\eta_{1},\eta_{2})$.
\end{remark}

\vspace{.35cm}
\section{Properties of $B_{p,q}^{\lambda;\sigma,\tau}(\eta_{1},\eta_{2})$}
In this section, we obtain some interesting relations, summation formulas and product formulas for the generalized Beta function $B_{p,q}^{\lambda;\sigma,\tau}(\eta_{1},\eta_{2})$.
\begin{theorem}\label{2t.3}
The extended beta function satisfies the following functional relation:
\begin{equation}\label{frel}
B_{p,q}^{\lambda;\sigma,\tau}(\eta_{1}+1,\eta_{2})+B_{p,q}^{\lambda;\sigma,\tau}(\eta_{1},\eta_{2}+1)=B_{p,q}^{\lambda;\sigma,\tau}(\eta_{1},\eta_{2}).
\end{equation}
\end{theorem}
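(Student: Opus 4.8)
The plan is to mimic the classical proof of the Pascal-type identity $B(\eta_1+1,\eta_2)+B(\eta_1,\eta_2+1)=B(\eta_1,\eta_2)$, working directly from the integral definition \eqref{nbeta}. The key algebraic observation is that, inside the integrand, the two extra factors of $t$ and $(1-t)$ that distinguish $B_{p,q}^{\lambda;\sigma,\tau}(\eta_1+1,\eta_2)$ and $B_{p,q}^{\lambda;\sigma,\tau}(\eta_1,\eta_2+1)$ from $B_{p,q}^{\lambda;\sigma,\tau}(\eta_1,\eta_2)$ combine to give $t+(1-t)=1$, while the two Mittag-Leffler factors $E_\lambda\!\left(-p/t^\sigma\right)$ and $E_\lambda\!\left(-q/(1-t)^\tau\right)$ are common to all three integrands and simply come along for the ride.

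First I would write out the left-hand side as a single integral over $[0,1]$ by linearity of the integral. From \eqref{nbeta},
\begin{equation*}
\aligned
B_{p,q}^{\lambda;\sigma,\tau}(\eta_1+1,\eta_2)+B_{p,q}^{\lambda;\sigma,\tau}(\eta_1,\eta_2+1)
=&\int_0^1 \Bigl[t^{\eta_1}(1-t)^{\eta_2-1}+t^{\eta_1-1}(1-t)^{\eta_2}\Bigr]\\
&\times E_\lambda\!\left(-\frac{p}{t^\sigma}\right)E_\lambda\!\left(-\frac{q}{(1-t)^\tau}\right)dt.
\endaligned
\end{equation*}
Next I would factor $t^{\eta_1-1}(1-t)^{\eta_2-1}$ out of the bracket, leaving $t+(1-t)=1$, so the bracketed expression reduces to $t^{\eta_1-1}(1-t)^{\eta_2-1}$. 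The integral then is exactly \eqref{nbeta}, i.e. $B_{p,q}^{\lambda;\sigma,\tau}(\eta_1,\eta_2)$, which is the claimed identity.

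There is essentially no hard part here; the only point requiring a word of care is the justification for splitting the sum of the two integrals into a single integral, which is immediate since under the stated hypotheses $\Re(\eta_1)>0$, $\Re(\eta_2)>0$, $\Re(p)\ge 0$, $\Re(q)\ge 0$ each of the three integrals converges absolutely (the Mittag-Leffler factors are bounded on $(0,1)$ when $p,q\ge 0$, and the power-type singularities at the endpoints are integrable), so linearity of the Lebesgue integral applies without obstruction. I would state this convergence remark in one sentence and then present the two-line computation above.
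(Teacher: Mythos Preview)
Your proof is correct and follows exactly the same route as the paper: write the left-hand side as a single integral with common Mittag--Leffler factors, then use $t+(1-t)=1$ to collapse the bracket to $t^{\eta_1-1}(1-t)^{\eta_2-1}$. Your added sentence on absolute convergence is a nice touch that the paper omits.
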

\begin{proof}
Solving L.H.S. of \eqref{frel}, we get
\begin{equation*}
\aligned
B_{p,q}^{\lambda;\sigma,\tau}&(\eta_{1}+1,\eta_{2})+B_{p,q}^{\lambda;\sigma,\tau}(\eta_{1},\eta_{2}+1)\\
=&\int_{0}^{1}\,\left\lbrace\, t^{\eta_{1}}\,(1-t)^{\eta_{2}-1}+t^{\eta_{1}-1}\,(1-t)^{\eta_{2}}\,\right\rbrace~E_{\lambda}\left(-\frac{p}{t^{\sigma}}\right)\,E_{\lambda}\left(-\frac{q}{(1-t)}^{\tau}\right)\,dt,
\endaligned
\end{equation*}
and a little manipulation leads us to the desired reult.
\end{proof}

\begin{theorem}
The following summation formula is valid for $B_{p,q}^{\lambda}(\eta_{1},\eta_{2})$:
\begin{equation}\label{sformula}
B_{p,q}^{\lambda;\sigma,\tau}(\eta_{1},1-\eta_{2})=\sum_{n=0}^{\infty}\,\frac{(\eta_{2})_{n}}{n!}\,B_{p,q}^{\lambda;\sigma,\tau}(\eta_{1}+n,1)\quad(\Re(p)>0,~\Re(q)>0).
\end{equation}
\end{theorem}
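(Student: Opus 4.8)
The plan is to start from the integral definition \eqref{nbeta} with the second parameter taken to be $1-\eta_{2}$, so that
\[
B_{p,q}^{\lambda;\sigma,\tau}(\eta_{1},1-\eta_{2})=\int_{0}^{1}t^{\eta_{1}-1}(1-t)^{-\eta_{2}}\,E_{\lambda}\!\left(-\frac{p}{t^{\sigma}}\right)E_{\lambda}\!\left(-\frac{q}{(1-t)^{\tau}}\right)dt,
\]
and then to replace the factor $(1-t)^{-\eta_{2}}$ by its binomial (Maclaurin) expansion $(1-t)^{-\eta_{2}}=\sum_{n=0}^{\infty}\frac{(\eta_{2})_{n}}{n!}\,t^{n}$, valid for $0\le t<1$. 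Interchanging the order of summation and integration then gives
\[
B_{p,q}^{\lambda;\sigma,\tau}(\eta_{1},1-\eta_{2})=\sum_{n=0}^{\infty}\frac{(\eta_{2})_{n}}{n!}\int_{0}^{1}t^{\eta_{1}+n-1}(1-t)^{0}\,E_{\lambda}\!\left(-\frac{p}{t^{\sigma}}\right)E_{\lambda}\!\left(-\frac{q}{(1-t)^{\tau}}\right)dt,
\]
and each integral on the right is, by the definition \eqref{nbeta}, exactly $B_{p,q}^{\lambda;\sigma,\tau}(\eta_{1}+n,1)$ (the second parameter being $1$, so the exponent of $1-t$ is $0$). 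This yields \eqref{sformula}.

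The one step that genuinely requires care, and which I expect to be the main obstacle, is the termwise integration: the binomial series for $(1-t)^{-\eta_{2}}$ converges only on $[0,1)$ and not uniformly up to the endpoint $t=1$. The remedy is to exploit the extra factor $E_{\lambda}\!\big(-q/(1-t)^{\tau}\big)$: since $\Re(q)>0$ and $\tau>0$, the argument $-q/(1-t)^{\tau}\to-\infty$ as $t\to 1^{-}$, and $E_{\lambda}(-x)\to 0$ (indeed $E_{\lambda}(-x)=O(1/x)$) as $x\to+\infty$ for $\lambda>0$, so the integrand decays at $t=1$ and the integral over $[1-\delta,1]$ is controlled uniformly in $n$. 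More precisely, one can first restrict to $[0,1-\delta]$, where the binomial series converges uniformly so the interchange is immediate, then bound the tail contribution on $[1-\delta,1]$ using the decay of the Mittag-Leffler factor together with $\sum_{n}\frac{(\eta_{2})_{n}}{n!}t^{n}=(1-t)^{-\Re(\eta_{2})}$-type estimates, and let $\delta\to0$. Equivalently, one invokes Fubini--Tonelli on $(0,1)\times\mathbb{N}$ after checking that $\sum_{n}\frac{|(\eta_{2})_{n}|}{n!}\int_{0}^{1}t^{\Re(\eta_{1})+n-1}\big|E_{\lambda}(-p/t^{\sigma})\big|\big|E_{\lambda}(-q/(1-t)^{\tau})\big|\,dt<\infty$, which again reduces to the integrability near $t=1$ guaranteed by $\Re(q)>0$.

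It should be noted that the summation-index shift is purely formal once the interchange is justified, and that the hypotheses $\Re(p)>0$, $\Re(q)>0$ stated in \eqref{sformula} are precisely what make the argument go through; for $\Re(\eta_{1})>0$ the integrals $B_{p,q}^{\lambda;\sigma,\tau}(\eta_{1}+n,1)$ are all well defined. A brief remark at the end could point out that setting $\sigma=\tau=\lambda=1$ (and, if desired, $p=q$) recovers the corresponding summation formulas for the earlier extended beta functions of \cite{choi-parmar} and \cite{lee}.
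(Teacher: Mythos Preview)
Your proposal is correct and follows exactly the paper's own argument: expand $(1-t)^{-\eta_{2}}$ by the generalized binomial theorem inside the integral \eqref{nbeta} and interchange summation and integration to recognise each term as $B_{p,q}^{\lambda;\sigma,\tau}(\eta_{1}+n,1)$. Your discussion of the justification for the interchange (via the decay of the Mittag--Leffler factor near $t=1$) is more careful than the paper, which simply asserts that the interchange is permissible.
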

\begin{proof}
To prove above result, we make use of the generalized binomial theorem
\begin{equation*}
(1-t)^{-\eta_{2}}=\sum_{n=0}^{\infty}\,(\eta_{2})_{n}\,\frac{t^{n}}{n!},\quad (|t|<1).
\end{equation*}
Therefore, \eqref{nbeta} can be written as
\begin{equation*}
B_{p,q}^{\lambda;\sigma,\tau}(\eta_{1},1-\eta_{2})=\int_{0}^{1}\,\sum_{n=0}^{\infty}(\eta_{2})_{n}\,\frac{t^{\eta_{1}+n-1}}{n!}\,E_{\lambda}\left(-\frac{p}{t^{\sigma}}\right)\,E_{\lambda}\left(-\frac{q}{(1-t)^{\tau}}\right)\,dt.
\end{equation*}
Now by interchanging the order of integration and summation, we can easily obtain \eqref{sformula}.
\end{proof}

\begin{theorem}
The following infinite summation formula holds:
\begin{equation}\label{isformula}
B_{p,q}^{\lambda;\sigma,\tau}(\eta_{1},\eta_{2})=\sum_{n=0}^{\infty}\,B_{p,q}^{\lambda;\sigma,\tau}(\eta_{1}+n,\eta_{2}+1)\quad(\Re(p)>0, \Re(q)>0).
\end{equation}
\end{theorem}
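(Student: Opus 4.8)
The plan is to mirror the argument used for the summation formula \eqref{sformula}, but now expanding the factor $(1-t)^{\eta_{2}-1}$ instead of a negative power of $(1-t)$. First I would split off one power and use the geometric series,
\begin{equation*}
(1-t)^{\eta_{2}-1}=(1-t)^{\eta_{2}}(1-t)^{-1}=(1-t)^{\eta_{2}}\sum_{n=0}^{\infty}t^{n}\qquad(|t|<1),
\end{equation*}
which is the case $\eta=1$ of the generalized binomial theorem already invoked above. Substituting this into the definition \eqref{nbeta} gives
\begin{equation*}
B_{p,q}^{\lambda;\sigma,\tau}(\eta_{1},\eta_{2})=\int_{0}^{1}\sum_{n=0}^{\infty}t^{\eta_{1}+n-1}\,(1-t)^{\eta_{2}}\,E_{\lambda}\!\left(-\frac{p}{t^{\sigma}}\right)E_{\lambda}\!\left(-\frac{q}{(1-t)^{\tau}}\right)dt.
\end{equation*}

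Next I would interchange summation and integration and recognise the $n$-th term. Writing $(1-t)^{\eta_{2}}=(1-t)^{(\eta_{2}+1)-1}$, the $n$-th integral is exactly
\begin{equation*}
\int_{0}^{1}t^{(\eta_{1}+n)-1}(1-t)^{(\eta_{2}+1)-1}\,E_{\lambda}\!\left(-\frac{p}{t^{\sigma}}\right)E_{\lambda}\!\left(-\frac{q}{(1-t)^{\tau}}\right)dt=B_{p,q}^{\lambda;\sigma,\tau}(\eta_{1}+n,\eta_{2}+1),
\end{equation*}
which gives \eqref{isformula}. The hypotheses $\Re(\eta_{1})>0$ and $\Re(\eta_{2})>0$ ensure that every term on the right is well defined, since $\Re(\eta_{1}+n)>0$ for all $n\ge 0$ and $\Re(\eta_{2}+1)>0$.

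The only delicate point, and the main obstacle, is justifying the interchange of $\sum$ and $\int$, since the series $\sum t^{n}$ converges only on $[0,1)$ and its partial sums are unbounded near $t=1$. This is handled by observing that $\bigl|\sum_{k=0}^{n}t^{k}\bigr|=\frac{1-t^{n+1}}{1-t}\le\frac{1}{1-t}$ for all $n$ and all $t\in[0,1)$, so the $n$-th partial sum of the rearranged integrand is dominated, uniformly in $n$, by $t^{\Re(\eta_{1})-1}(1-t)^{\Re(\eta_{2})-1}\,\bigl|E_{\lambda}(-p/t^{\sigma})\bigr|\,\bigl|E_{\lambda}(-q/(1-t)^{\tau})\bigr|$, which is precisely the modulus of the integrand in \eqref{nbeta} and is integrable on $(0,1)$ because that defining integral converges absolutely for $\Re(p)>0$, $\Re(q)>0$, $\Re(\eta_{1})>0$, $\Re(\eta_{2})>0$; here the factor $E_{\lambda}(-q/(1-t)^{\tau})$ supplies the decay that tames $1/(1-t)$ as $t\to1^{-}$, and $E_{\lambda}(-p/t^{\sigma})$ does likewise at $t=0$. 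Dominated convergence then licenses the term-by-term integration, and \eqref{isformula} follows. For real nonnegative parameters one may bypass the estimate entirely, since all terms are nonnegative and Tonelli's theorem applies directly.
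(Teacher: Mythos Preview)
Your argument is correct and follows the paper's proof essentially verbatim: split $(1-t)^{\eta_2-1}=(1-t)^{\eta_2}\sum_{n\ge0}t^n$, insert into \eqref{nbeta}, and interchange sum and integral. The only difference is that you supply a dominated-convergence justification for the interchange, whereas the paper simply asserts it.
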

\begin{proof}
Using the relation
\begin{equation*}
(1-t)^{\eta_{2}-1}=(1-t)^{\eta_{2}}\,\sum_{n=0}^{\infty}\,t^{n},
\end{equation*}
We obtain
\begin{equation*}
B_{p,q}^{\lambda;\sigma,\tau}(\eta_{1},\eta_{2})=\int_{0}^{1}(1-t)^{\eta_{2}}\sum_{n=0}^{\infty}\,t^{\eta_{1}+n-1}E_{\lambda}\left(-\frac{p}{t^{\sigma}}\right)\,E_{\lambda}\left(-\frac{q}{(1-t)^{\tau}}\right)\,dt.
\end{equation*}
Interchanging the order of integration and summation in the last expression leads us to the desired result \eqref{isformula}.
\end{proof}
\begin{theorem}
The following relation holds true:
\begin{equation}\label{rel}
B_{p,q}^{\lambda;\sigma,\tau}(\eta,-\eta-n)=\sum_{k=0}^{n}\,\left(\begin{tabular}{c}
n\\
k\\
\end{tabular}
\right)B_{p,q}^{\lambda;\sigma,\tau}(\eta+k,-\eta-k),\quad (n \in \mathbb{N}_{0}).
\end{equation}	 
\end{theorem}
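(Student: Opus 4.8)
The plan is to work directly from the integral representation \eqref{nbeta} and reduce the claimed identity to a finite binomial expansion of $(1-t)^{-n}$. First I would write the left-hand side as
\begin{equation*}
B_{p,q}^{\lambda;\sigma,\tau}(\eta,-\eta-n)=\int_{0}^{1}\,t^{\eta-1}\,(1-t)^{-\eta-n-1}\,E_{\lambda}\!\left(-\frac{p}{t^{\sigma}}\right)\,E_{\lambda}\!\left(-\frac{q}{(1-t)^{\tau}}\right)\,dt,
\end{equation*}
and factor the algebraic part as $t^{\eta-1}(1-t)^{-\eta-n-1}=t^{\eta-1}(1-t)^{-\eta-1}\,(1-t)^{-n}$. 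The Mittag-Leffler factors do not involve $n$ and will simply be carried along unchanged.

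The key step is the elementary identity
\begin{equation*}
(1-t)^{-n}=\left(1+\frac{t}{1-t}\right)^{n}=\sum_{k=0}^{n}\binom{n}{k}\left(\frac{t}{1-t}\right)^{k}\qquad(0<t<1),
\end{equation*}
which follows from the finite binomial theorem since $1+\frac{t}{1-t}=\frac{1}{1-t}$. Substituting this expansion into the integrand gives
\begin{equation*}
t^{\eta-1}(1-t)^{-\eta-n-1}=\sum_{k=0}^{n}\binom{n}{k}\,t^{\eta+k-1}\,(1-t)^{-\eta-k-1}.
\end{equation*}
Because the sum is finite, it may be interchanged with the integral without any convergence subtlety, and each resulting summand is exactly
\begin{equation*}
\binom{n}{k}\int_{0}^{1}t^{\eta+k-1}(1-t)^{-\eta-k-1}\,E_{\lambda}\!\left(-\frac{p}{t^{\sigma}}\right)E_{\lambda}\!\left(-\frac{q}{(1-t)^{\tau}}\right)dt=\binom{n}{k}\,B_{p,q}^{\lambda;\sigma,\tau}(\eta+k,-\eta-k),
\end{equation*}
by \eqref{nbeta}. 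Collecting terms yields \eqref{rel}.

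I do not anticipate a genuine obstacle here: the only thing to ``see'' is the rewriting $(1-t)^{-n}=(1+\tfrac{t}{1-t})^{n}$, after which everything is termwise identification. If one wants to be careful, one should note that the integrand has an integrable singularity at the endpoints only when the parameters are restricted so that the integrals $B_{p,q}^{\lambda;\sigma,\tau}(\eta+k,-\eta-k)$ converge (guaranteed here by $\Re(p)>0,\ \Re(q)>0$, which force exponential-type decay of the Mittag-Leffler factors near $t=0$ and $t=1$), so each term on the right is well defined; the finite-sum interchange then needs no further justification.
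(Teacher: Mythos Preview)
Your argument is correct, but it is genuinely different from the paper's. The paper does not touch the integral at all: it invokes the functional relation established just before,
\[
B_{p,q}^{\lambda;\sigma,\tau}(\eta_{1}+1,\eta_{2})+B_{p,q}^{\lambda;\sigma,\tau}(\eta_{1},\eta_{2}+1)=B_{p,q}^{\lambda;\sigma,\tau}(\eta_{1},\eta_{2}),
\]
specializes to $\eta_{1}=\eta$, $\eta_{2}=-\eta-n$, and then iterates this two-term recurrence, writing out the cases $n=1,2$ and concluding ``by continuing the process.'' In effect the paper is running Pascal's rule implicitly through an induction it only sketches. Your route bypasses both the functional relation and the induction: the single observation $(1-t)^{-n}=\bigl(1+\tfrac{t}{1-t}\bigr)^{n}$ delivers the binomial coefficients in one stroke, and the finite sum interchanges with the integral trivially. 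This is cleaner and more self-contained; the paper's approach has the (mild) advantage of being purely algebraic in the extended beta symbol, so it would transfer verbatim to any family satisfying the same functional relation without reference to an integral.

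One small inaccuracy worth flagging: your convergence remark says the Mittag--Leffler factors give ``exponential-type decay'' at the endpoints. That is only true for $\lambda=1$; for general $\lambda\in(0,2)$ one has $E_{\lambda}(-x)\sim \tfrac{1}{\Gamma(1-\lambda)\,x}$ as $x\to\infty$, i.e.\ algebraic decay. This still suffices to regularize the endpoint singularities (and the paper is equally informal on this point), but the phrase should be softened.
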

\begin{proof}
We have
\begin{equation*}
B_{p,q}^{\lambda;\sigma,\tau}(\eta_{1}+1,\eta_{2})+B_{p,q}^{\lambda;\sigma,\tau}(\eta_{1},\eta_{2}+1)=B_{p,q}^{\lambda;\sigma,\tau}(\eta_{1},\eta_{2}).
\end{equation*}
On substituting $\eta_{1}=\eta$ and $\eta_{2}=-\eta-n$ above, we arrive at
\begin{equation*}
\hspace{1.9cm}B_{p,q}^{\lambda;\sigma,\tau}(\eta,-\eta-n)=B_{p,q}^{\lambda;\sigma,\tau}(\eta,-\eta-n+1)+B_{p,q}^{\lambda;\sigma,\tau}(\eta+1,-\eta-n).
\end{equation*}
Writing this formula recursively with $n=1,2,3,\ldots$, we obtain
\begin{equation*}
\hspace{.5cm}B_{p,q}^{\lambda;\sigma,\tau}(\eta,-\eta-1)=B_{p,q}^{\lambda;\sigma,\tau}(\eta,-\eta)+B_{p,q}^{\lambda;\sigma,\tau}(\eta+1,-\eta-1),
\end{equation*}

\begin{equation*}
B_{p,q}^{\lambda;\sigma,\tau}(\eta,-\eta-2)=B_{p,q}^{\lambda;\sigma,\tau}(\eta,-\eta)+2B_{p,q}^{\lambda;\sigma,\tau}(\eta+1,-\eta-1)+B_{p,q}^{\lambda;\sigma,\tau}(\eta+2,-\eta-2),
\end{equation*}
and so on. By continuing the process, we arrive at \eqref{rel}.
\end{proof}

\vspace{.25cm}
\begin{theorem}
The extended beta function $B_{p,q}^{\lambda;\sigma,\tau}(x,y)$ has the following Mellin transformation formula:
\begin{equation}\label{bmellin}
M\left\lbrace B_{p,q}^{\lambda}(\eta_{1},\eta_{2});p \rightarrow r,q \rightarrow s\right\rbrace =\frac{\pi^{2}}{sin(\pi r)\,sin(\pi s)\Gamma(1-r\lambda)\Gamma(1-s\lambda)}\,B(\eta_{1}+r,\eta_{2}+s)
\end{equation}
\begin{equation*}
(\Re(r)>0,~ \Re(s)>0,~\Re(\eta_{1}+r)>0,~\Re(\eta_{2}+s)>0).
\end{equation*}
\end{theorem}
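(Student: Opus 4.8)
The plan is to compute the Mellin transform directly from the definition \eqref{nbeta}, essentially reusing the computation already carried out in the proof of Theorem \ref{2t.1}. Recall that the Mellin transform in two variables here means
\[
M\left\lbrace B_{p,q}^{\lambda;\sigma,\tau}(\eta_{1},\eta_{2});p\rightarrow r,q\rightarrow s\right\rbrace
=\int_{0}^{\infty}\int_{0}^{\infty} p^{r-1}q^{s-1}\,B_{p,q}^{\lambda;\sigma,\tau}(\eta_{1},\eta_{2})\,dp\,dq,
\]
so the statement \eqref{bmellin} is literally the special case $\sigma=\tau=1$ of Theorem \ref{2t.1} (with $x,y$ there read as $\eta_{1},\eta_{2}$). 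Thus the cleanest route is: first substitute \eqref{nbeta} into the double integral, then invoke the uniform convergence to interchange the order of integration so that the $p$- and $q$-integrals are brought inside the $t$-integral, producing the factored inner integrals $\int_{0}^{\infty}p^{r-1}E_{\lambda}(-p/t^{\sigma})\,dp$ and $\int_{0}^{\infty}q^{s-1}E_{\lambda}(-q/(1-t)^{\tau})\,dq$.

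Next I would evaluate each inner integral using the relation \eqref{rel-mittag} with the specialization $\gamma=\delta=1$ (so that $E_{\lambda,1}^{1}=E_{\lambda}$), which gives $\int_{0}^{\infty}u^{a-1}E_{\lambda}(-wu)\,du=\Gamma(a)\Gamma(1-a)/\bigl(w^{a}\Gamma(1-a\lambda)\bigr)$. Applying this with $a=r$, $w=t^{-\sigma}$ for the $p$-integral and $a=s$, $w=(1-t)^{-\tau}$ for the $q$-integral produces a factor $t^{\sigma r}(1-t)^{\tau s}$ together with the Gamma-function constants. The remaining $t$-integral is then $\int_{0}^{1}t^{\eta_{1}+\sigma r-1}(1-t)^{\eta_{2}+\tau s-1}\,dt=B(\eta_{1}+\sigma r,\eta_{2}+\tau s)$ by \eqref{beta}. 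Finally, I would apply the reflection formula $\Gamma(x)\Gamma(1-x)=\pi/\sin(\pi x)$ twice (to $\Gamma(r)\Gamma(1-r)$ and $\Gamma(s)\Gamma(1-s)$) to rewrite the constant as $\pi^{2}/\bigl(\sin(\pi r)\sin(\pi s)\Gamma(1-r\lambda)\Gamma(1-s\lambda)\bigr)$, and then set $\sigma=\tau=1$ to obtain exactly \eqref{bmellin}.

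The only real point requiring care — the ``hard part,'' such as it is — is the justification of the interchange of the triple integral; this needs $\Re(r)>0$, $\Re(s)>0$ to make the inner Mellin integrals of the Mittag-Leffler function converge at $0$, and the asymptotic decay of $E_{\lambda}$ for negative arguments (together with $\Re(\eta_{1}+r)>0$, $\Re(\eta_{2}+s)>0$) to ensure absolute convergence of the whole iterated integral, after which Fubini--Tonelli applies. Since Theorem \ref{2t.1} has already asserted this interchange under the same convergence hypotheses, I would simply cite that theorem and note that \eqref{bmellin} is its $\sigma=\tau=1$ instance, giving a one-line proof; alternatively, if a self-contained argument is preferred, I would carry out the three displayed steps above explicitly.
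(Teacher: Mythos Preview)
Your proposal is correct and follows essentially the same approach as the paper: insert the defining integral \eqref{nbeta}, interchange the order of integration, evaluate the inner $p$- and $q$-integrals via \eqref{rel-mittag}, recognize the remaining $t$-integral as a Beta function, and finish with the reflection formula. The only cosmetic difference is that the paper performs the substitutions $p/t^{\sigma}=u$, $q/(1-t)^{\tau}=v$ before invoking the Mittag--Leffler integral, whereas you apply \eqref{rel-mittag} directly with $w=t^{-\sigma}$ and $w=(1-t)^{-\tau}$; your observation that \eqref{bmellin} is the $\sigma=\tau=1$ instance of Theorem \ref{2t.1} is also apt.
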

\begin{proof}
We begin by providing Euler's reflection formula, which we use later in theorem
\begin{equation}\label{refformula}
\Gamma(x)\,\Gamma(1-x)=\frac{\pi}{\sin(\pi x)}.
\end{equation}
Now applying the usual Mellin transform on \eqref{nbeta}, we get
\begin{equation*}
\aligned
M&\left\lbrace B_{p,q}^{\lambda\sigma,\tau}(x,y);p \rightarrow r,q \rightarrow s\right\rbrace\\
=&\int_{0}^{\infty}\,\int_{0}^{\infty}\,p^{r-1} q^{s-1}\left\lbrace\int_{0}^{1}\,t^{\eta_{1}-1}\,(1-t)^{\eta_{2}-1}\,E_{\lambda}\left(-\frac{p}{t^{\sigma}}\right)\,E_{\lambda}\left(-\frac{q}{(1-t)^{\tau}}\right)\,dt\right\rbrace dp\,dq.
\endaligned
\end{equation*}
Interchanging the order of integrations , we have
\begin{equation*}
\aligned
M&\left\lbrace B_{p,q}^{\lambda\sigma,\tau}(\eta_{1},\eta_{2});p \rightarrow r,q \rightarrow s\right\rbrace\\
=&\int_{0}^{1}\,t^{\eta_{1}-1}\,(1-t)^{\eta_{2}-1}
\left\lbrace \int_{0}^{\infty} p^{r-1}\,E_{\lambda}\left(-\frac{p}{t^{\sigma}}\right)dp.\int_{0}^{\infty} q^{s-1}\, E_{\lambda}\left(-\frac{q}{(1-t)^{\tau}}\right)dq \right\rbrace dt.
\endaligned
\end{equation*}
Now substituting $\frac{p}{t^{\sigma}}=u$ and $\frac{q}{(1-t)^{\tau}}=v$ above, we obtain
\begin{equation*}
\aligned
M\left\lbrace B_{p,q}^{\lambda\sigma,\tau}(\eta_{1},\eta_{2});p \rightarrow r,q \rightarrow s\right\rbrace=&\int_{0}^{1}\,t^{\eta_{1}+\sigma r-1}\,(1-t)^{\eta_{2}+\tau s-1}\\
\times&\left\lbrace \int_{0}^{\infty} u^{r-1}\,E_{\lambda}\left(-u\right)du.\int_{0}^{\infty} v^{s-1}\, E_{\lambda}\left(-v\right)dv \right\rbrace dt.
\endaligned
\end{equation*}
Since we have
\begin{equation*}
\int_{0}^{\infty}\,t^{a-1}\,E_{\lambda,\gamma}^{\delta}(-wt)\,dt=\frac{\Gamma(a)\Gamma(\delta-a)}{\Gamma(\delta)\,w^{a}\,\Gamma(\gamma-a\lambda)},
\end{equation*}
which, for $\gamma=\delta=w=1$, becomes
\begin{equation*}
\int_{0}^{\infty}\,t^{a-1}\,E_{\lambda,\gamma}^{\delta}(-wt)\,dt=\frac{\Gamma(a)\Gamma(1-a)}{\Gamma(1-a\lambda)}.\hspace{.9cm}
\end{equation*}
Using above formula, we arrive at
\begin{equation*}
M\left\lbrace B_{p,q}^{\lambda\sigma,\tau}(\eta_{1},\eta_{2});p \rightarrow r,q \rightarrow s\right\rbrace=\frac{\Gamma(r)\Gamma(1-r)}{\Gamma(1-r\lambda)}\,\frac{\Gamma(s)\Gamma(1-s)}{\Gamma(1-s\lambda)}~B_{p,q}^{\lambda;\sigma,\tau}(\eta_{1}+\sigma r,\eta_{2}+\tau s),
\end{equation*}
which on using \eqref{refformula} lead to the required result \eqref{bmellin}.
\end{proof}

\vspace{.35cm}
\noindent In statistical distribution theory, gamma and beta functions have been used extensively. We now define the beta distribution of \eqref{nbeta} and obtain its mean, variance and moment generating function.\\
For $B_{p,q}^{\lambda;\sigma,\tau}(\eta_{1},\eta_{2})$, the beta distribution is given by \begin{equation}\label{bdist}
f(t)= \left\{
\begin{array}{cl}
\frac{1}{B_{p,q}^{\lambda;\sigma,\tau}(\eta_{1},\eta_{2})}t^{\eta_{1}-1}(1-t)^{\eta_{2}-1}\,E_{\lambda}\left(-\frac{p}{t^{\sigma}}\right)\,E_{\lambda}\left(-\frac{q}{(1-t)^{\tau}}\right)\quad & (0<t<1), \\
0  & \hbox{otherwise}.
\end{array}
\right.
\end{equation}
For any real number $\nu$, we have
\begin{equation}\label{expec}
E(X^{\nu})=\frac{B_{p,q}^{\lambda;\sigma,\tau}(\eta_{1}+\nu,\eta_{2})}{B_{p,q}^{\lambda;\sigma,\tau}(\eta_{1},\eta_{2})}
\end{equation}
\begin{equation*}
(p>0,~q>0,~-\infty<\eta_{1}<\infty,~\infty<\eta_{2}<\eta_{2}).
\end{equation*}
When $\nu=1$, we get the mean as a particular case of \eqref{expec}
\begin{equation}\label{mean}
\mu=E(X)=\frac{B_{p,q}^{\lambda;\sigma,\tau}(\eta_{1}+1,\eta_{2})}{B_{p,q}^{\lambda;\sigma,\tau}(\eta_{1},\eta_{2})},
\end{equation}
and the variance of the distribution is defined by
\begin{equation}\label{variance}
\sigma^{2}=E(x)-\left\lbrace E(X)\right\rbrace^{2}=\frac{B_{p,q}^{\lambda;\sigma,\tau}(\eta_{1},\eta_{2})\,B_{p,q}^{\lambda;\sigma,\tau}(\eta_{1}+2,\eta_{2})-\left\lbrace B_{p,q}^{\lambda;\sigma,\tau}(\eta_{1}+1,\eta_{2}) \right\rbrace^{2}}{\left\lbrace B_{p,q}^{\lambda;\sigma,\tau}(\eta_{1},\eta_{2})\right\rbrace^{2}}.
\end{equation}
The moment generating function of the distribution is defined by
\begin{equation}\label{moment}
M(t)=\sum_{n=0}^{\infty}\,\frac{t^{n}}{n!}\,E(X^{n})=\frac{1}{B_{p,q}^{\lambda;\sigma,\tau}(\eta_{1},\eta_{2})}\sum_{n=0}^{\infty}\,B_{p,q}^{\lambda;\sigma,\tau}(\eta_{1}+n,\eta_{2})\frac{t^{n}}{n!}.
\end{equation}
The cumulative distribution is given by
\begin{equation}\label{cdf}
F(x)=\frac{B_{z,p,q}^{\lambda;\sigma,\tau}(\eta_{1},\eta_{2})}{B_{p,q}^{\lambda;\sigma,\tau}(\eta_{1},\eta_{2})}
\end{equation}
where
\begin{equation}\label{incompleteb}
B_{z,p,q}^{\lambda;\sigma,\tau}(\eta_{1},\eta_{2})=\int_{0}^{x}\,t^{\eta_{1}-1}(1-t)^{\eta_{2}-1}\,E_{\lambda}\left(-\frac{p}{t^{\sigma}}\right)\,E_{\lambda}\left(-\frac{q}{(1-t)^{\tau}}\right)\,dt,
\end{equation}
\begin{equation*}
(p>0,~q>0,~\lambda,\sigma,\tau>0,~-\infty<\eta_{1},\eta_{2}<\infty)
\end{equation*}
is a new extension of incomplete beta function.

\vspace{.50cm}
\section{Generalization of Extended Hypergeometric and Confluent Hypergeometric functions}

Here, we introduce a generalization of extended hypergeometric and confluent hypergeometric functions in terms of $B_{p,q}^{\lambda;\sigma,\tau}(\eta_{1},\eta_{2})$.

\vspace{.25cm} 
\begin{equation}\label{nhyper}
F_{p,q}^{\lambda;\sigma,\tau}\left(\eta_{1},\eta_{2};\eta_{3};z\right)=\sum_{n=0}^{\infty}\,(\eta_{1})
_{n}~\frac{B_{p,q}^{\lambda;\sigma,\tau}(\eta_{2}+n,\eta_{3}-\eta_{2})}{B(\eta_{2};\eta_{3}-\eta_{2})}\,\frac{z^{n}}{n!},
\end{equation}
\begin{equation*}
(p\geq0,~q\geq0,~|z|<1,~\lambda,\sigma,\tau>0,~\Re(\eta_{3})>\Re(\eta_{2})>0)
\end{equation*}
and
\begin{equation}\label{nchyper}
\Phi_{p,q}^{\lambda;\sigma,\tau}\left(\eta_{2};\eta_{3};z\right)=\sum_{n=0}^{\infty}~\frac{B_{p,q}^{\lambda;\sigma,\tau}(\eta_{2}+n,\eta_{3}-\eta_{2})}{B(\eta_{2};\eta_{3}-\eta_{2})}\,\frac{z^{n}}{n!}.
\end{equation}

\begin{equation*}
(p>0,~q>0,~\lambda>0,~\Re(\eta_{3})>\Re(\eta_{2})>0)
\end{equation*}

\vspace{.15cm}
\noindent$F_{p,q}^{\lambda;\sigma,\tau}\left(\eta_{1},\eta_{2};\eta_{3};z\right)$ and $\Phi_{p,q}^{\lambda;\sigma,\tau}\left(\eta_{1},\eta_{2};\eta_{3};z\right)$ are the further generalizations of the extended Gauss hypergeometric function and extended confluent hypergeometric function and for $p=q$ and $\lambda=\sigma=\tau=1$, they reduce to \eqref{ehyper} and \eqref{echyper} respectively.

\vspace{.25cm}
\begin{center}
Integral Representations of $F_{p,q}^{\lambda;\sigma,\tau}\left(\eta_{1},\eta_{2};\eta_{3};z\right)$ \text{and} $\Phi_{p,q}^{\lambda;\sigma,\tau}\left(\eta_{1},\eta_{2};\eta_{3};z\right)$:
\end{center}

\begin{theorem}
The following integral representations for the extended hypergeometric
$F_{p,q}^{\lambda;\sigma,\tau}\left(\eta_{1},\eta_{2};\eta_{3};z\right)$ and confluent hypergeometric function $\Phi_{p,q}^{\lambda;\sigma,\tau}\left(\eta_{1},\eta_{2};\eta_{3};z\right)$ holds true:
\begin{equation}\label{5.3}
\aligned
F_{p,q}^{\lambda;\sigma,\tau}&\left(\eta_{1},\eta_{2};\eta_{3};z\right)=\frac{1}{B(\eta_{2},\eta_{3}-\eta_{2})}\\
\times& \int_{0}^{1}~t^{\eta_{2}-1}\,(1-t)^{\eta_{3}-\eta_{2}-1}\,E_{\lambda}\left(-\frac{p}{t^{\sigma}}\right)\,E_{\lambda}\left(-\frac{q}{(1-t)^{\tau}}\right)\,\sum_{n=0}^{\infty}(\eta_{1})_{n}\frac{(zt)^{n}}{n!}\,dt,
\endaligned
\end{equation}
\begin{equation*}
(p>0,~q>0;~\lambda>0;~p=0,\,q=0 ~and ~|z|<1;~\Re(\eta_{3})>\Re(\eta_{2})>0).
\end{equation*}

\begin{equation}\label{5.4}
\aligned
F_{p,q}^{\lambda;\sigma,\tau}&\left(\eta_{1},\eta_{2};\eta_{3};z\right)=\frac{1}{B(\eta_{2},\eta_{3}-\eta_{2})}\\
\times&  \int_{0}^{1}~t^{\eta_{2}-1}\,(1-t)^{\eta_{3}-\eta_{2}-1}\,(1-zt)^{-\eta_{1}}~E_{\lambda}\left(-\frac{p}{t^{\sigma}}\right)\,E_{\lambda}\left(-\frac{q}{(1-t)^{\tau}}\right)\,dt,
\endaligned
\end{equation}

\begin{equation*}
(p>0,~q>0;~\lambda>0;~p=0,\,q=0 ~and ~|\arg (1-z)|<\pi;~\Re(\eta_{3})>\Re(\eta_{2})>0).
\end{equation*}
\begin{equation}\label{5.5}
\aligned
F&_{p,q}^{\lambda;\sigma,\tau}\left(\eta_{1},\eta_{2};\eta_{3};z\right)=\frac{1}{B(\eta_{2},\eta_{3}-\eta_{2})}\\
\times&  \int_{0}^{\infty}~u^{\eta_{2}-1}\,(1+u)^{\eta_{1}-\eta_{3}}\,[u(1-z)]^{-\eta_{1}}~E_{\lambda}\left(-p\left(\frac{1+u}{u}\right)^{\sigma}\right)\,E_{\lambda}\left(-q(1+u)^{\tau}\right)\,du,
\endaligned
\end{equation}

\begin{equation*}
(p>0,~q>0;~\lambda>0;~p=0,\,q=0 ~and ~|\arg (1-z)|<\pi;~\Re(\eta_{3})>\Re(\eta_{2})>0).
\end{equation*}
\begin{equation}\label{5.6}
\aligned
F_{p,q}^{\lambda;\sigma,\tau}\left(\eta_{1},\eta_{2};\eta_{3};z\right)=&\frac{2}{B(\eta_{2},\eta_{3}-\eta_{2})}\\
\times& \int_{0}^{\frac{\pi}{2}}~\frac{\sin^{2\eta_{2}-1}v\,\cos^{2\eta_{3}-2\eta_{2}-1}\,v}{(1-z\sin^{2}v)^{\eta_{1}}}\,~E_{\lambda}\left(-p\,\csc^{2\sigma}\,v\right)\,E_{\lambda}\left(-q\sec^{2\tau}v\right)\,dv,
\endaligned
\end{equation}
\begin{equation*}
(p>0,~q>0;~\lambda>0;~p=0,\,q=0 ~and ~|\arg (1-z)|<\pi;~\Re(\eta_{3})>\Re(\eta_{2})>0).
\end{equation*}
\begin{equation}\label{5.7}
\aligned
\Phi_{p,q}^{\lambda;\sigma,\tau}\left(\eta_{2};\eta_{3};z\right)=&\frac{\exp(zt)}{B(\eta_{2},\eta_{3}-\eta_{2})}\\
\times&  \int_{0}^{1}~t^{\eta_{2}-1}\,(1-t)^{\eta_{3}-\eta_{2}-1}~E_{\lambda}\left(-\frac{p}{t^{\sigma}}\right)\,E_{\lambda}\left(-\frac{q}{(1-t)^{\tau}}\right)\,dt,
\endaligned
\end{equation}
\begin{equation*}
(p>0,~q>0;~\lambda>0;~\Re(\eta_{3})>\Re(\eta_{2})>0).
\end{equation*}

\vspace{.25cm}
\begin{equation}\label{5.8}
\aligned
\Phi_{p,q}^{\lambda;\sigma,\tau}&\left(\eta_{2};\eta_{3};z\right)=\frac{\exp(z)}{B(\eta_{2},\eta_{3}-\eta_{2})}\\
\times&  \int_{0}^{1}~t^{\eta_{2}-1}\,(1-t)^{\eta_{3}-\eta_{2}-1}~\exp(-zt)~E_{\lambda}\left(-\frac{p}{t^{\sigma}}\right)\,E_{\lambda}\left(-\frac{q}{(1-t)^{\tau}}\right)\,dt.
\endaligned
\end{equation}
\begin{equation*}
(p>0,~q>0;~\lambda>0;~\Re(\eta_{3})>\Re(\eta_{2})>0).
\end{equation*}

\end{theorem}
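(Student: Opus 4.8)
The plan is to obtain every representation by inserting the integral definition \eqref{nbeta} of $B_{p,q}^{\lambda;\sigma,\tau}(\eta_{2}+n,\eta_{3}-\eta_{2})$ (that is, \eqref{nbeta} with the parameters $\eta_{1}\mapsto\eta_{2}+n$ and $\eta_{2}\mapsto\eta_{3}-\eta_{2}$) into the defining series \eqref{nhyper} and \eqref{nchyper}, interchanging the summation with the integration, and then either summing the resulting elementary series or carrying out a change of variable.

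I would begin with \eqref{5.3}. After the substitution just described, the factor $t^{\eta_{2}+n-1}$ splits as $t^{\eta_{2}-1}t^{n}$, and moving $\sum_{n\ge 0}(\eta_{1})_{n}(zt)^{n}/n!$ past the $t$-independent factors inside the integral produces exactly the right-hand side of \eqref{5.3}. The interchange of $\sum$ and $\int$ is justified, as in the proof of Theorem~\ref{2t.1}, by the uniform convergence on $[0,1]$ of the partial sums of $\sum_{n\ge 0}(\eta_{1})_{n}(zt)^{n}/n!$ for $|z|<1$, the remaining factors $t^{\eta_{2}-1}$, $(1-t)^{\eta_{3}-\eta_{2}-1}$ being integrable because $\Re(\eta_{3})>\Re(\eta_{2})>0$ and the two Mittag--Leffler factors staying bounded on $(0,1)$; the degenerate case $p=q=0$ reduces to Euler's classical integral for ${}_{2}F_{1}$.

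Representation \eqref{5.4} then follows from \eqref{5.3} via the binomial expansion $\sum_{n\ge 0}(\eta_{1})_{n}(zt)^{n}/n!=(1-zt)^{-\eta_{1}}$, which holds for $|zt|<1$ and hence for all $t\in(0,1)$ as soon as $|z|<1$; the stated range $|\arg(1-z)|<\pi$ is reached afterwards by analytic continuation, both sides being holomorphic in $z$ on the cut plane. From \eqref{5.4} I would deduce \eqref{5.5} by the change of variable $t=u/(1+u)$ (so $1-t=1/(1+u)$, $dt=du/(1+u)^{2}$, which converts $t^{\eta_{2}-1}(1-t)^{\eta_{3}-\eta_{2}-1}\,dt$ into $u^{\eta_{2}-1}(1+u)^{-\eta_{3}}\,du$ and $(1-zt)^{-\eta_{1}}$ into $(1+u)^{\eta_{1}}\bigl(1+u(1-z)\bigr)^{-\eta_{1}}$, after which the powers of $1+u$ combine into $(1+u)^{\eta_{1}-\eta_{3}}$), and \eqref{5.6} by $t=\sin^{2}v$ (so $1-t=\cos^{2}v$, $dt=2\sin v\cos v\,dv$), followed in each case by collecting the powers and rewriting the Mittag--Leffler arguments.

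For the confluent hypergeometric function the same two steps applied to \eqref{nchyper} give \eqref{5.7}: inserting \eqref{nbeta} and interchanging, the inner series is now $\sum_{n\ge 0}(zt)^{n}/n!=\exp(zt)$, which converges for every $z$, so no restriction on $z$ is needed. Finally, \eqref{5.8} is obtained from \eqref{5.7} by the substitution $t\mapsto 1-t$ combined with the splitting $\exp(zt)=\exp(z)\exp\bigl(-z(1-t)\bigr)$, i.e.\ a Kummer-type manipulation. The one genuinely delicate point in all of this is the termwise integration in the first step; I would dispose of it once and for all by the uniform-boundedness argument sketched above, so that the remaining work in each of \eqref{5.3}--\eqref{5.8} is a routine substitution.
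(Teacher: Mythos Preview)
Your proposal is correct and follows essentially the same route as the paper: insert the integral definition \eqref{nbeta} into the series \eqref{nhyper} and \eqref{nchyper}, interchange sum and integral, recognize the binomial (resp.\ exponential) series, and then obtain \eqref{5.5}--\eqref{5.6} by the substitutions $t=u/(1+u)$ and $t=\sin^{2}v$ in \eqref{5.4}. The paper's proof is terser and does not spell out the justification for the interchange or the Kummer-type step you use for \eqref{5.8}, but the underlying argument is identical.
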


\vspace{.25cm}
\begin{proof}
We can easily obtain \eqref{5.3} by using the definition \eqref{nbeta} in \eqref{nhyper}. The integral \eqref{5.4} can be obtained by using the binomial expansion 
\begin{equation*}
(1-zt)^{-\eta_{1}}=\sum_{n=0}^{\infty}\,(\eta_{1})_{n}\,\frac{(zt^{n})}{n!} 
\end{equation*}
in \eqref{5.3}. By choosing $t=\frac{u}{1+u}$, $t=\sin^{2}v$ in \eqref{5.4}, we obtain \eqref{5.5} and \eqref{5.6} respectively. By using a similar approach, we can easily establish the representations \eqref{5.7} and \eqref{5.8}. 
\end{proof}

\begin{remark}
The case $\sigma=\tau =\lambda=1$ and $\lambda=1$, $p=q$, $\sigma=\tau$ in equations \eqref{5.3}-\eqref{5.8} leads to the corresponding results in \cite{choi-parmar} and \cite{lee} respectively. For $p=0=q$ and $\sigma=\tau =\lambda=1$, we get basic hypergeometric and confluent hypergeometric function \cite{and-roy}.
\end{remark}

\begin{theorem}\label{t5.8}
For $\Re(p)\geq0$,$\Re(q)\geq0$ and $\lambda\in\mathbb{C}$, the function $F_{p,q}^{\lambda;\sigma,\tau}$ possesses the following generating function:
\begin{equation}\label{gen-rel}
\aligned
\sum_{n=0}^{\infty}&\,\left( 
\begin{tabular}{c}
$\lambda$+n-1\\
n\\
\end{tabular}
\right)F_{p,q}^{\lambda;\sigma,\tau}\left(\lambda+n,\eta_{2};\eta_{3};z\right)\,t^{n}\\
=&(1-t)^{-\lambda}F_{p,q}^{\lambda;\sigma,\tau}\left(\lambda,\eta_{2};\eta_{3};\frac{z}{1-t}\right)\quad(t\leq1),
\endaligned
\end{equation}
where $F_{p,q}^{\lambda;\sigma,\tau}\left(\eta_{1},\eta_{2};\eta_{3};z\right)$ is the extended hypergeometric function defined by \eqref{nhyper}.
\end{theorem}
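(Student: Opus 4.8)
The plan is to argue directly from the series definition \eqref{nhyper}, since \eqref{gen-rel} is at bottom a rearrangement of a double power series. First I would write $\binom{\lambda+n-1}{n}=\dfrac{(\lambda)_n}{n!}$ and substitute the defining series of $F_{p,q}^{\lambda;\sigma,\tau}(\lambda+n,\eta_2;\eta_3;z)$ into the left-hand side of \eqref{gen-rel}, obtaining the double sum
\begin{equation*}
\sum_{n=0}^{\infty}\sum_{m=0}^{\infty}\frac{(\lambda)_n}{n!}\,(\lambda+n)_m\,\frac{B_{p,q}^{\lambda;\sigma,\tau}(\eta_2+m,\eta_3-\eta_2)}{B(\eta_2,\eta_3-\eta_2)}\,\frac{z^m\,t^n}{m!}.
\end{equation*}
For $|t|<1$ and $z$ in the stated region this double series converges absolutely: one bounds $B_{p,q}^{\lambda;\sigma,\tau}(\eta_2+m,\eta_3-\eta_2)$ by $B(\Re(\eta_2)+m,\Re(\eta_3-\eta_2))$ using that the Mittag--Leffler factors occurring in \eqref{nbeta} are bounded on the negative real axis, and then compares with a product of two convergent series; hence the order of summation may be interchanged.

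Summing in $n$ first is the heart of the computation. Using the Pochhammer identities $(\lambda)_n(\lambda+n)_m=(\lambda)_{n+m}=(\lambda)_m(\lambda+m)_n$ one obtains
\begin{equation*}
\frac{(\lambda)_n}{n!}\,(\lambda+n)_m=(\lambda)_m\,\frac{(\lambda+m)_n}{n!},
\end{equation*}
and the binomial series $\sum_{n\ge0}\dfrac{(\lambda+m)_n}{n!}\,t^n=(1-t)^{-(\lambda+m)}$ collapses the $n$-sum to $(\lambda)_m(1-t)^{-\lambda}(1-t)^{-m}$. Substituting this back and factoring out $(1-t)^{-\lambda}$ leaves
\begin{equation*}
(1-t)^{-\lambda}\sum_{m=0}^{\infty}(\lambda)_m\,\frac{B_{p,q}^{\lambda;\sigma,\tau}(\eta_2+m,\eta_3-\eta_2)}{B(\eta_2,\eta_3-\eta_2)}\,\frac{1}{m!}\left(\frac{z}{1-t}\right)^m,
\end{equation*}
which by \eqref{nhyper} is exactly $(1-t)^{-\lambda}F_{p,q}^{\lambda;\sigma,\tau}\!\left(\lambda,\eta_2;\eta_3;\frac{z}{1-t}\right)$, the right-hand side of \eqref{gen-rel}.

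The only genuinely delicate point — the \emph{main obstacle} — is the justification of the term-by-term rearrangement of the double series; everything after that is routine bookkeeping with Pochhammer symbols and the binomial theorem. I would dispatch it by reducing to the classical case $p=q=0$ through the uniform bound on $E_\lambda$ along the negative real axis (the same bound that makes \eqref{nbeta} well defined), so that the double series is dominated by the product of the absolutely convergent series for $(1-|t|)^{-\lambda}$ and for the ${}_2F_1$-type data evaluated at $|z|/(1-|t|)$. The restriction $|t|<1$ stated in \eqref{gen-rel} is precisely what this estimate needs.
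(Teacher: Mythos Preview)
Your proof is correct and follows essentially the same route as the paper: expand the left-hand side via \eqref{nhyper} into a double sum, use the Pochhammer identity $(\lambda)_n(\lambda+n)_m=(\lambda)_m(\lambda+m)_n$ to swap the roles of $n$ and $m$, and collapse the inner sum with the generalized binomial theorem. The only difference is that you supply an explicit justification for interchanging the order of summation, which the paper subsumes under ``a little simplification.''
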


\vspace{.25cm}
\begin{proof}
We recall the generalized binomial coefficient (for real and complex parameters $\nu$ and $\delta$) expanded as
\begin{equation}\label{bino1}
\left( 
\begin{tabular}{c}
$\nu$\\
$\delta$\\
\end{tabular}
\right):=\frac{\Gamma(\nu+1)}{\Gamma(\delta+1)\Gamma(\nu-\delta+1)}=:\left( 
\begin{tabular}{c}
$\nu$\\
$\nu-\delta$\\
\end{tabular}
\right)\quad (\nu,\delta\in\mathbb{C}),
\end{equation}
such that for $\delta=n$~$(n\in\mathbb{N}_{0})$, we get
\begin{equation}\label{bino2}
\left( 
\begin{tabular}{c}
	$\nu$\\
	$\delta$\\
\end{tabular}
\right)=\frac{\nu(\nu-1)\cdots(\nu-n+1)}{n!}=\frac{(-1)^{n}(-\nu)_{n}}{n!}\quad(n\in\mathbb{N}_{0}).
\end{equation}
Now, let L be the left hand side of assertion \eqref{gen-rel}. Using \eqref{nhyper} into L, we get
\begin{equation}\label{4.12}
L=\sum_{n=0}^{\infty}\,\left( 
\begin{tabular}{c}
$\nu$+n-1\\
n\\
\end{tabular}
\right)\left(\sum_{k=0}^{\infty}(\nu+n)_{k}\,\frac{B_{p,q}^{\lambda;\sigma,\tau}(\eta_{2}+k,\eta_{3}-\eta_{2} )}{B(\eta_{2},\eta_{3}-\eta_{2})}\,\frac{z^{k}}{k!}\right)t^{n},
\end{equation}
which, after a little simplification, gives
\begin{equation}\label{4.13}
L=\sum_{k=0}^{\infty}\,(\nu)_{k}\,\frac{B_{p,q}^{\lambda;\sigma,\tau}(\eta_{2}+k,\eta_{3}-\eta_{2} )}{B(\eta_{2},\eta_{3}-\eta_{2})}\left[\sum_{n=0}^{\infty}\,\left( 
\begin{tabular}{c}
$\nu$+n+k-1\\
n\\
\end{tabular}
\right)t^{n}\right]\frac{z^{k}}{k!}.
\end{equation}
Finally, applying the generalized binomial expansion
\begin{equation*}
\sum_{n=0}^{\infty}\,\left( 
\begin{tabular}{c}
$\nu$+n-1\\
n\\
\end{tabular}
\right)t^{n}=(1-t)^{-\nu}\quad (|t|<1;\nu\in\mathbb{C}),
\end{equation*}
on the inner summation in \eqref{4.13}, we get the expected result \eqref{gen-rel} of Theorem \ref{t5.8}.
\end{proof}
\vspace{.35cm}
\section{Differentiation formulas for $F_{p,q}^{\lambda;\sigma,\tau}\left(\eta_{1},\eta_{2};\eta_{3};\,z\right)$ \text{and} $\Phi_{p,q}^{\lambda;\sigma,\tau}\left(\eta_{1},\eta_{2};\eta_{3};\,z\right)$}
By differentiating \eqref{nhyper} and \eqref{nchyper}, we obtain differentiation formulas with the help of the formula:
\begin{equation}\label{bdiff}
B(\eta_{2},\eta_{3}-\eta_{1})=\frac{\eta_{3}}{\eta_{2}}\,B(\eta_{2}+1,\eta_{3}-\eta_{2}).
\end{equation}
\begin{theorem}
For $n \in \mathbb{N}_{0}$, the following differentiation formulas holds true:
\begin{equation}\label{diff1}
\frac{d}{dz}\left\lbrace F_{p,q}^{\lambda;\sigma,\tau}\left(\eta_{1},\eta_{2};\eta_{3};z\right)\right\rbrace=\frac{\eta_{1}\eta_{2}}{\eta_{3}}\,F_{p,q}^{\lambda;\sigma,\tau}\left(\eta_{1}+1,\eta_{2}+1;\eta_{3}+1;\,z\right).
\end{equation}

\hskip .1cm
\begin{equation}\label{diff2}
\frac{d^{n}}{dz^{n}}\left\lbrace F_{p,q}^{\lambda;\sigma,\tau}\left(\eta_{1},\eta_{2};\eta_{3};z\right)\right\rbrace=\frac{(\eta_{1})_{n}(\eta_{2})_{n}}{(\eta_{3})_{n}}\,F_{p,q}^{\lambda;\sigma,\tau}\left(\eta_{1}+n,\eta_{2}+n;\eta_{3}+n;\,z\right).
\end{equation}

\hskip .1cm
\begin{equation}\label{diff3}
\frac{d^{n}}{dz^{n}}\left\lbrace\Phi_{p,q}^{\lambda;\sigma,\tau}\left(\eta_{1},\eta_{2};\eta_{3};z\right)\right\rbrace=\frac{(\eta_{1})_{n}(\eta_{2})_{n}}{(\eta_{3})_{n}}\,\Phi_{p,q}^{\lambda;\sigma,\tau}\left(\eta_{1}+n,\eta_{2}+n;\eta_{3}+n;\,z\right).
\end{equation}
\end{theorem}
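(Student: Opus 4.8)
The plan is to establish \eqref{diff1} by differentiating the defining series \eqref{nhyper} term by term, and then to deduce \eqref{diff2} and \eqref{diff3} by iteration. Term-by-term differentiation is legitimate because the series \eqref{nhyper} converges uniformly on compact subsets of its domain: for $p,q>0$ the coefficient $B_{p,q}^{\lambda;\sigma,\tau}(\eta_{2}+n,\eta_{3}-\eta_{2})$ is dominated in modulus by the ordinary beta function of the real parts of its arguments, so the tail is controlled by the convergent series built from $(\eta_{1})_{n}z^{n}/n!$, and the same holds after differentiating once. Hence $\frac{d}{dz}$ may be passed under the summation sign.

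First I would differentiate \eqref{nhyper} once in $z$; the $n=0$ term is constant and drops out, and shifting the index $n\mapsto n+1$ gives
\[
\frac{d}{dz}F_{p,q}^{\lambda;\sigma,\tau}(\eta_{1},\eta_{2};\eta_{3};z)=\sum_{n=0}^{\infty}(\eta_{1})_{n+1}\,\frac{B_{p,q}^{\lambda;\sigma,\tau}(\eta_{2}+n+1,\eta_{3}-\eta_{2})}{B(\eta_{2},\eta_{3}-\eta_{2})}\,\frac{z^{n}}{n!}.
\]
The key observation is a parameter match: with $\eta_{2}'=\eta_{2}+1$ and $\eta_{3}'=\eta_{3}+1$ one has $\eta_{2}'+n=\eta_{2}+n+1$ and $\eta_{3}'-\eta_{2}'=\eta_{3}-\eta_{2}$, so the extended beta factor is exactly $B_{p,q}^{\lambda;\sigma,\tau}(\eta_{2}'+n,\eta_{3}'-\eta_{2}')$. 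Then I would use $(\eta_{1})_{n+1}=\eta_{1}(\eta_{1}+1)_{n}$ together with the identity \eqref{bdiff}, in the form $B(\eta_{2},\eta_{3}-\eta_{2})=\tfrac{\eta_{3}}{\eta_{2}}B(\eta_{2}+1,\eta_{3}-\eta_{2})$, to rewrite $1/B(\eta_{2},\eta_{3}-\eta_{2})=\tfrac{\eta_{2}}{\eta_{3}}\cdot 1/B(\eta_{2}',\eta_{3}'-\eta_{2}')$. Pulling the constant $\eta_{1}\tfrac{\eta_{2}}{\eta_{3}}$ outside, the remaining series is precisely $F_{p,q}^{\lambda;\sigma,\tau}(\eta_{1}+1,\eta_{2}+1;\eta_{3}+1;z)$, which is \eqref{diff1}.

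For \eqref{diff2} I would induct on $n$: applying \eqref{diff1} to $F_{p,q}^{\lambda;\sigma,\tau}(\eta_{1}+k,\eta_{2}+k;\eta_{3}+k;z)$ contributes the factor $\tfrac{(\eta_{1}+k)(\eta_{2}+k)}{\eta_{3}+k}$ and shifts each parameter up by one, so after $n$ applications the accumulated constant telescopes to $\prod_{k=0}^{n-1}\tfrac{(\eta_{1}+k)(\eta_{2}+k)}{\eta_{3}+k}=\tfrac{(\eta_{1})_{n}(\eta_{2})_{n}}{(\eta_{3})_{n}}$. The confluent case \eqref{diff3} is entirely parallel: differentiate the series \eqref{nchyper} term by term (uniform convergence on compacta being even easier here, as there is no restriction $|z|<1$), perform the same index shift, invoke \eqref{bdiff} for the normalizing constant, and iterate. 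The only genuinely delicate point in the whole argument is the justification of term-by-term differentiation; once that is in place, the rest is bookkeeping with the Pochhammer identity and \eqref{bdiff}, and no new estimate on $B_{p,q}^{\lambda;\sigma,\tau}$ beyond its trivial domination by the classical beta function is needed.
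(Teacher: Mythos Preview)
Your proposal is correct and follows essentially the same route as the paper: differentiate the defining series \eqref{nhyper} term by term, shift the index $n\mapsto n+1$, use $(\eta_1)_{n+1}=\eta_1(\eta_1+1)_n$ together with the beta-function relation \eqref{bdiff}, and then iterate to obtain \eqref{diff2} and \eqref{diff3}. The only difference is that you supply a justification for term-by-term differentiation (via uniform convergence and domination by the classical beta function), which the paper simply takes for granted.
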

\begin{proof}
By differentiating \eqref{nhyper} with repect to $z$, we get	
\begin{equation*}
\frac{d^{n}}{dz^{n}}\,F_{p,q}^{\lambda;\sigma,\tau}\left(\eta_{1},\eta_{2};\eta_{3};z\right)=\sum_{n=1}^{\infty}\,\frac{B_{p,q}^{\lambda;\sigma,\tau}(\eta_{2}+n,\eta_{3}-\eta_{2})}{B(\eta_{2},\eta_{3}-\eta_{2})}\,(\eta_{1})_{n}\,\frac{z^{n-1}}{(n-1)!}.
\end{equation*}
On replacing $n$ by $n+1$ and using \eqref{5.4}, we easily get \eqref{diff1}. A recursive process of this establishes \eqref{diff2}.~In a similar way, we can obtain \eqref{diff3}.
\end{proof}

\vspace{.25cm}
\begin{center}
Transformation formulas for $F_{p,q}^{\lambda;\sigma,\tau}\left(\eta_{1},\eta_{2};\eta_{3};\,z\right)$ \text{and} $\Phi_{p,q}^{\lambda;\sigma,\tau}\left(\eta_{1},\eta_{2};\eta_{3};\,z\right)$:
\end{center}
The following formulas for the extended hypergeometric and confluent hypergeometric funcion holds true:
\begin{equation}\label{trans1}
F_{p,q}^{\lambda;\sigma,\tau}\left(\eta_{1},\eta_{2};\eta_{3};\,z\right)=(1-z)^{-\alpha}\,F_{p,q}^{\lambda;\sigma,\tau}\left(\eta_{1},\eta_{3}-\eta_{2};\eta_{3};\,-\frac{z}{1-z}\right).
\end{equation}
\begin{equation*}
(p>0,q>0;\,p=0q=0 ~\text{and} ~\arg(1-z)<\pi;\,\Re(\eta_{3})>\Re(\eta_{2})>0).
\end{equation*}

\hskip .1cm
\begin{equation}\label{trans2}
F_{p,q}^{\lambda;\sigma,\tau}\left(\eta_{1},\eta_{2};\eta_{3};\,1-\frac{1}{z}\right)=z^{\alpha}\,F_{p,q}^{\lambda;\sigma,\tau}\left(\eta_{1},\eta_{3}-\eta_{2};\eta_{3};\,1-z\right).
\end{equation}
\begin{equation*}
(p>0,q>0;\,p=0q=0 ~\text{and}~ \arg(1-z)<\pi;\,\Re(\eta_{3})>\Re(\eta_{2})>0).
\end{equation*}

\hskip .1cm
\begin{equation}\label{trans3}
F_{p,q}^{\lambda;\sigma,\tau}\left(\eta_{1},\eta_{2};\eta_{3};\,\frac{z}{1+z}\right)=(1+z)^{\alpha}\,F_{p,q}^{\lambda;\sigma,\tau}\left(\eta_{1},\eta_{3}-\eta_{2};\eta_{3};\,-z\right).
\end{equation}
\begin{equation*}
(p>0,q>0;\,p=0q=0 ~\text{and} ~\arg(1-z)<\pi;\,\Re(\eta_{3})>\Re(\eta_{2})>0).
\end{equation*}

\vspace{.25cm}
\begin{equation}\label{trans4}
\Phi_{p,q}^{\lambda;\sigma,\tau}\left(\eta_{2},\eta_{3};\,z\right)=e^{z}\,\Phi_{p,q}^{\lambda;\sigma,\tau}\left(\eta_{3}-\eta_{2};\eta_{3};\,-z\right).
\end{equation}

\vspace{.35cm}
\begin{proof}
Replacing $t$ by $1-t$ in \eqref{5.4} and with the help of expression
\begin{equation*}
[1-z(1-t)]^{-\eta_{1}}=(1-z)^{-\eta_{1}}\,\left(1+\frac{z}{1-z}t\right)^{-\eta_{1}},
\end{equation*}
we have
\begin{equation}
\aligned
F_{p,q}^{\lambda;\sigma,\tau}&\left(\eta_{1},\eta_{2};\eta_{3};z\right)=\frac{(1-z)^{-\eta_{1}}}{B(\eta_{2},\eta_{3}-\eta_{2})}\\
\times&  \int_{0}^{1}~t^{\eta_{2}-1}\,(1-t)^{\eta_{3}-\eta_{2}-1}\,\left(1+\frac{z}{1-z}t\right)^{-\eta_{1}}\,E_{\lambda}\left(-\frac{p}{t^{\sigma}}\right)E_{\lambda}\left(-\frac{q}{(1-t)^{\tau}}\right)\,dt,
\endaligned
\end{equation}

\begin{equation*}
(p>0,~q>0;~\lambda>0;~p=0,\,q=0 ~and ~|\arg (1-z)|<\pi;~\Re(\eta_{3})>\Re(\eta_{2})>0).
\end{equation*}

which easily proves \eqref{trans1}. Replacing $z$ by $1-\frac{1}{z}$ and $\frac{z}{1+z}$ in \eqref{trans1} yields \eqref{trans2} and \eqref{trans3} respectively. Now the formula \eqref{trans4} can be obtained by following \eqref{5.7} and \eqref{5.8}.
\end{proof}

\vspace{.25cm}
\begin{theorem}
	The extended hypergeometric function $F_{p,q}^{\lambda;\sigma,\tau}\left(\eta_{1},\eta_{2};\eta_{3};z\right)$ has the following Mellin transformation formula:
	\begin{equation}\label{mellin-hyper}
	\aligned
	M&\left\lbrace F_{p,q}^{\lambda;\sigma,\tau}\left(\eta_{1},\eta_{2};\eta_{3};z\right);p \rightarrow r,q \rightarrow s\right\rbrace
	=\frac{\pi^{2}\,}{sin(\pi r)\,sin(\pi s)\Gamma(1-r\lambda)\Gamma(1-s\lambda)}\\
	\times &\frac{B(\eta_{2}+r,\eta_{3}+s-\eta_{2})
	}{B(\eta_{2},\eta_{3}-\eta_{2})}\,F(\eta_{1},\eta_{2}+r,\eta_{3}+r+s;z)
\endaligned
	\end{equation}
	\begin{equation*}
	(\Re(\eta_{2}+r)>0,~ \Re(\eta_{3}+s)>0).
	\end{equation*}
\end{theorem}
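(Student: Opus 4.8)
The plan is to mimic the proof of the Mellin transform formula \eqref{bmellin} for the extended beta function, now applied termwise to the series definition \eqref{nhyper}. First I would write out the Mellin transform
\begin{equation*}
M\left\lbrace F_{p,q}^{\lambda;\sigma,\tau}\left(\eta_{1},\eta_{2};\eta_{3};z\right);p\rightarrow r,q\rightarrow s\right\rbrace=\int_{0}^{\infty}\int_{0}^{\infty}p^{r-1}q^{s-1}\sum_{n=0}^{\infty}(\eta_{1})_{n}\,\frac{B_{p,q}^{\lambda;\sigma,\tau}(\eta_{2}+n,\eta_{3}-\eta_{2})}{B(\eta_{2},\eta_{3}-\eta_{2})}\,\frac{z^{n}}{n!}\,dp\,dq,
\end{equation*}
and justify interchanging the double integral with the infinite sum (this is the convergence point, discussed below). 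Pulling the sum outside, the $p,q$-integral acts only on $B_{p,q}^{\lambda;\sigma,\tau}(\eta_{2}+n,\eta_{3}-\eta_{2})$, which is exactly the Mellin transform computed in \eqref{bmellin} with first parameter $\eta_{2}+n$ in place of $\eta_{1}$ and second parameter $\eta_{3}-\eta_{2}$ in place of $\eta_{2}$.

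Invoking \eqref{bmellin}, the inner integral equals
\begin{equation*}
\frac{\pi^{2}}{\sin(\pi r)\sin(\pi s)\Gamma(1-r\lambda)\Gamma(1-s\lambda)}\,B\bigl((\eta_{2}+n)+\sigma r,\,(\eta_{3}-\eta_{2})+\tau s\bigr).
\end{equation*}
Substituting this back and factoring the $n$-independent constant out of the series gives
\begin{equation*}
\frac{\pi^{2}}{\sin(\pi r)\sin(\pi s)\Gamma(1-r\lambda)\Gamma(1-s\lambda)}\,\frac{1}{B(\eta_{2},\eta_{3}-\eta_{2})}\sum_{n=0}^{\infty}(\eta_{1})_{n}\,B\bigl(\eta_{2}+\sigma r+n,\,\eta_{3}-\eta_{2}+\tau s\bigr)\frac{z^{n}}{n!}.
\end{equation*}
The final step is to recognize the remaining series: writing $B(\eta_{2}+\sigma r+n,\eta_{3}-\eta_{2}+\tau s)=B(\eta_{2}+\sigma r,\eta_{3}-\eta_{2}+\tau s)\cdot\frac{(\eta_{2}+\sigma r)_{n}}{(\eta_{3}+\sigma r+\tau s)_{n}}$ via $B(a+n,b)/B(a,b)=(a)_{n}/(a+b)_{n}$, the sum becomes $B(\eta_{2}+\sigma r,\eta_{3}-\eta_{2}+\tau s)\,{}_{2}F_{1}(\eta_{1},\eta_{2}+\sigma r;\eta_{3}+\sigma r+\tau s;z)$; multiplying and dividing by $B(\eta_{2},\eta_{3}-\eta_{2})$ appropriately then matches the claimed right-hand side (with $\sigma=\tau=1$, as the statement is written). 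So the bulk of the argument is bookkeeping with the Pochhammer identity for beta quotients and recognizing the hypergeometric series.

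**Main obstacle.** The one genuinely non-routine step is justifying the interchange of $\int\int\,dp\,dq$ with $\sum_{n}$. The summand involves $(\eta_{1})_{n}z^{n}/n!$ times $B_{p,q}^{\lambda;\sigma,\tau}(\eta_{2}+n,\eta_{3}-\eta_{2})$, and since $B_{p,q}^{\lambda;\sigma,\tau}$ is a positive-kernel integral with the Mittag-Leffler factors bounded, one expects a bound of the form $|B_{p,q}^{\lambda;\sigma,\tau}(\eta_{2}+n,\eta_{3}-\eta_{2})|\le B_{0,0}^{\cdots}(\Re\eta_{2}+n,\Re\eta_{3}-\Re\eta_{2})$, which decays in $n$, so the $z$-series converges locally uniformly for $|z|<1$; combined with the finiteness of the $p,q$-integral (guaranteed by the hypotheses $\Re(r)>0$, $\Re(s)>0$, $\Re(\eta_{2}+r)>0$, $\Re(\eta_{3}+s)>0$ through \eqref{bmellin}), Fubini–Tonelli applies. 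I would state this using the uniform-convergence justification already invoked in the proof of Theorem \ref{2t.1}, rather than belabor it. Everything after the interchange is a direct computation.
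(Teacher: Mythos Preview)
Your proof is correct but follows a different route from the paper. The paper starts from the \emph{integral} representation \eqref{5.4}, applies the double Mellin transform directly to that integral, interchanges the order of the $t$-integral with the $p,q$-integrals, evaluates the inner $p$- and $q$-integrals via \eqref{rel-mittag} after the substitutions $u=p/t^{\sigma}$, $v=q/(1-t)^{\tau}$, and then recognizes the remaining $t$-integral $\int_{0}^{1}t^{\eta_{2}+\sigma r-1}(1-t)^{\eta_{3}+\tau s-\eta_{2}-1}(1-zt)^{-\eta_{1}}\,dt$ as the Euler integral for $B(\eta_{2}+\sigma r,\eta_{3}-\eta_{2}+\tau s)\,{}_{2}F_{1}(\eta_{1},\eta_{2}+\sigma r;\eta_{3}+\sigma r+\tau s;z)$. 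You instead work from the \emph{series} definition \eqref{nhyper}, push the Mellin transform through the sum, and invoke the already established beta Mellin formula \eqref{bmellin} termwise, recovering the hypergeometric series via the identity $B(a+n,b)/B(a,b)=(a)_{n}/(a+b)_{n}$. Your approach is more modular (it reuses \eqref{bmellin} as a black box and never re-does the Mittag-Leffler integral), at the cost of needing one extra interchange (sum versus integral) and the Pochhammer--beta identity; the paper's approach avoids the series manipulation entirely but essentially reproves \eqref{bmellin} inside the argument. Both land on the same expression, and your observation that the stated result corresponds to $\sigma=\tau=1$ is consistent with the paper's own final display, which in fact carries $\sigma r$ and $\tau s$.
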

\begin{proof}
	We begin by providing Euler's reflection formula, which we use later in theorem
	\begin{equation}\label{euler-ref}
	\Gamma(x)\,\Gamma(1-x)=\frac{\pi}{\sin(\pi x)}.
	\end{equation}
	Applying the usual Mellin transform on \eqref{nhyper}, we get
	\begin{equation*}
	\aligned
	M&\left\lbrace F_{p,q}^{\lambda;\sigma,\tau}\left(\eta_{1},\eta_{2};\eta_{3};z\right);p \rightarrow r,q \rightarrow s\right\rbrace=\frac{1}{B(\eta_{2},\eta_{3}-\eta_{2})}\int_{0}^{\infty}\,\int_{0}^{\infty}\,p^{r-1} q^{s-1}\\
	\times&\left\lbrace\int_{0}^{1}\,t^{\eta_{2}-1}\,(1-t)^{\eta_{3}-\eta_{2}-1}\,(1-zt)^{-\eta_{1}}\, E_{\lambda}\left(-\frac{p}{t^{\sigma}}\right)E_{\lambda}\left(-\frac{q}{(1-t)^{\tau}}\right)\,dt\right\rbrace dp\,dq.
	\endaligned
	\end{equation*}
	Interchanging the order of integrations, we have
	\begin{equation*}
	\aligned
	M&\left\lbrace
	 F_{p,q}^{\lambda;\sigma,\tau}\left(\eta_{1},\eta_{2};\eta_{3};z\right);p \rightarrow r,q \rightarrow s\right\rbrace=\frac{1}{B(\eta_{2},\eta_{3}-\eta_{2})}\\
	 \times&\int_{0}^{1}\,t^{\eta_{2}-1}\,(1-t)^{\eta_{3}-\eta_{2}-1}\,(1-zt)^{-\eta_{1}}
	\left\lbrace \int_{0}^{\infty} p^{r-1}\,E_{\lambda}\left(-\frac{p}{t^{\sigma}}\right)dp.\int_{0}^{\infty} q^{s-1}\, E_{\lambda}\left(-\frac{q}{(1-t)^{\tau}}\right)dq \right\rbrace dt.
	\endaligned
	\end{equation*}
	Now substituting $\frac{p}{t^{\sigma}}=u$ and $\frac{q}{(1-t)^{\tau}}=v$ above, we obtain
	\begin{equation*}
	\aligned
	M&\left\lbrace F_{p,q}^{\lambda;\sigma,\tau}\left(\eta_{1},\eta_{2};\eta_{3};z\right);p \rightarrow r,q \rightarrow s\right\rbrace=\frac{1}{B(\eta_{2},\eta_{3}-\eta_{2})}\\
	\times&\int_{0}^{1}\,t^{\eta_{2}+r\sigma-1}\,(1-t)^{\eta_{3}+\tau s-\eta_{2}-1}\,(1-zt)^{-\eta_{1}}
	\left\lbrace \int_{0}^{\infty} u^{r-1}\,E_{\lambda}\left(-u\right)du.\int_{0}^{\infty} v^{s-1}\, E_{\lambda}\left(-v\right)dv \right\rbrace dt.
	\endaligned
	\end{equation*}
	Since we have
	\begin{equation*}
	\int_{0}^{\infty}\,t^{a-1}\,E_{\lambda,\gamma}^{\delta}(-wt)dt=\frac{\Gamma(a)\Gamma(\delta-a)}{\Gamma(\delta)\,w^{a}\,\Gamma(\gamma-a\lambda)},
	\end{equation*}
	which, for $\gamma=\delta=w=1$, becomes
	\begin{equation*}
	\int_{0}^{\infty}\,t^{a-1}\,E_{\lambda,\gamma}^{\delta}(-wt)dt=\frac{\Gamma(a)\Gamma(1-a)}{\Gamma(1-a\lambda)},
	\end{equation*}
	Using above formula, we get
	\begin{equation*}
	\aligned
	M&\left\lbrace F_{p,q}^{\lambda;\sigma,\tau}\left(\eta_{1},\eta_{2};\eta_{3};z\right);p \rightarrow r,q \rightarrow s\right\rbrace\\
	=&\frac{\Gamma(r)\Gamma(1-r)}{\Gamma(1-r\lambda)}
	\frac{\Gamma(s)\Gamma(1-s)}{\Gamma(1-s\lambda)}~\frac{B(\eta_{2}+\sigma r,\eta_{3}+\tau s-\eta_{2})}{B(\eta_{2},\eta_{3}-\eta_{2})}\,F(\eta_{1},\eta_{2}+\sigma r,\eta_{3}+\sigma r+\tau s;z).
	\endaligned
	\end{equation*}
	Taking into account the formula \eqref{euler-ref} leads us to the result \eqref{mellin-hyper}.
\end{proof}

\begin{theorem}
The following Mellin transformation formula holds:
\begin{equation}
\aligned
M&\left\lbrace \Phi_{p,q}^{\lambda;\sigma,\tau}\left(\eta_{2};\eta_{3};z\right);p \rightarrow r,q \rightarrow s\right\rbrace 
=\frac{\pi^{2}\,}{sin(\pi r)\,sin(\pi s)\Gamma(1-r\lambda)\Gamma(1-s\lambda)}
\\
\times &\frac{B(\eta_{2}+\sigma r,\eta_{3}+\tau s-\eta_{2})
}{B(\eta_{2},\eta_{3}-\eta_{2})}\,\Phi(\eta_{2}+\sigma r,\eta_{3}+\sigma r+\tau s;z).
\endaligned
\end{equation}
\begin{equation*}
(\Re(\eta_{2}+r)>0,~ \Re(\eta_{3}+s)>0).
\end{equation*}
\end{theorem}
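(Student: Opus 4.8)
The plan is to imitate, step for step, the proof of the Mellin transform formula \eqref{mellin-hyper} for $F_{p,q}^{\lambda;\sigma,\tau}$, since $\Phi_{p,q}^{\lambda;\sigma,\tau}(\eta_{2};\eta_{3};z)$ differs from the hypergeometric case only by the absence of the factor $(1-zt)^{-\eta_{1}}$. First I would record Euler's reflection formula $\Gamma(x)\Gamma(1-x)=\pi/\sin(\pi x)$, to be used at the end, and then apply the double Mellin operator $M\{\,\cdot\,;p\to r,q\to s\}=\int_{0}^{\infty}\int_{0}^{\infty}p^{r-1}q^{s-1}(\cdot)\,dp\,dq$ directly to the series \eqref{nchyper} (equivalently, to the integral representation \eqref{5.7}).

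Next I would justify interchanging the $p,q$-integrations with the summation over $n$ and with the $t$-integral that defines each $B_{p,q}^{\lambda;\sigma,\tau}(\eta_{2}+n,\eta_{3}-\eta_{2})$; the uniform convergence of the Mittag--Leffler factors on the stated parameter ranges makes this legitimate, exactly as in the proof of \eqref{mellin-hyper}. After the interchange, the $n$-th term contains
\[
\int_{0}^{1}t^{\eta_{2}+n-1}(1-t)^{\eta_{3}-\eta_{2}-1}\left\{\int_{0}^{\infty}p^{r-1}E_{\lambda}\!\left(-\tfrac{p}{t^{\sigma}}\right)dp\int_{0}^{\infty}q^{s-1}E_{\lambda}\!\left(-\tfrac{q}{(1-t)^{\tau}}\right)dq\right\}dt .
\]
The substitutions $u=p/t^{\sigma}$ and $v=q/(1-t)^{\tau}$ extract factors $t^{\sigma r}$ and $(1-t)^{\tau s}$ and reduce the inner integrals to $\int_{0}^{\infty}u^{r-1}E_{\lambda}(-u)\,du$ and $\int_{0}^{\infty}v^{s-1}E_{\lambda}(-v)\,dv$, which by \eqref{rel-mittag} with $\gamma=\delta=w=1$ equal $\Gamma(r)\Gamma(1-r)/\Gamma(1-r\lambda)$ and $\Gamma(s)\Gamma(1-s)/\Gamma(1-s\lambda)$.

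What then remains is the $t$-integral $\int_{0}^{1}t^{\eta_{2}+\sigma r+n-1}(1-t)^{\eta_{3}+\tau s-\eta_{2}-1}\,dt=B(\eta_{2}+\sigma r+n,\ \eta_{3}+\tau s-\eta_{2})$, and the key step is the classical Beta identity
\[
B(\eta_{2}+\sigma r+n,\ \eta_{3}+\tau s-\eta_{2})=\frac{(\eta_{2}+\sigma r)_{n}}{(\eta_{3}+\sigma r+\tau s)_{n}}\,B(\eta_{2}+\sigma r,\ \eta_{3}+\sigma r+\tau s),
\]
which, fed back into the $n$-series and compared with \eqref{a4}, collapses the sum to $\dfrac{B(\eta_{2}+\sigma r,\eta_{3}+\tau s-\eta_{2})}{B(\eta_{2},\eta_{3}-\eta_{2})}\,\Phi(\eta_{2}+\sigma r;\ \eta_{3}+\sigma r+\tau s;\ z)$. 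Finally, rewriting $\Gamma(r)\Gamma(1-r)$ and $\Gamma(s)\Gamma(1-s)$ via the reflection formula turns the gamma prefactor into $\pi^{2}/(\sin(\pi r)\sin(\pi s)\Gamma(1-r\lambda)\Gamma(1-s\lambda))$, which yields the asserted identity. The main obstacle is purely bookkeeping: making rigorous the interchange of the summation with the two improper integrals, and then re-summing the Pochhammer-ratio series so that the third argument of the classical $\Phi$ emerges correctly as $\eta_{3}+\sigma r+\tau s$; everything else parallels the $F_{p,q}^{\lambda;\sigma,\tau}$ computation verbatim.
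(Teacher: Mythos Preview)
Your proposal is correct and follows exactly the route the paper intends: the paper gives no separate proof for this theorem, leaving it implicit as the confluent analogue of the Mellin computation for $F_{p,q}^{\lambda;\sigma,\tau}$, and you have faithfully transcribed that computation with $(1-zt)^{-\eta_{1}}$ replaced by $e^{zt}$ (equivalently, working term-by-term in the series \eqref{nchyper}). One small slip: in your displayed Beta identity the second argument on the right should be $\eta_{3}+\tau s-\eta_{2}$, not $\eta_{3}+\sigma r+\tau s$; you use the correct form in the very next line, so the argument is unaffected.
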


\vspace{.35cm}
\section{Representations for $B_{p,q}^{\lambda}\,(\eta_{1},\eta_{2})$ } \label{sec7}
In this section we obtain certain connections of the generalized Beta function \eqref{nbeta} in terms of other special functions and polynomials. The results obtained here are interesting and can further be applied to other extensions of Beta and other functions.

\vspace{.25cm}
\begin{itemize}

\item (Generalized hypergeometric representation).

\vspace{.25cm}
The Mittag-Leffler function is connected to the generalized hypergeometric function (see \cite{shukla-praja}) by the relation 
\begin{equation}\label{mittag-hyper}
E_{\lambda,\beta}^{\gamma,q}(z)=\sum_{n=0}^{\infty}\,\frac{(\gamma)_{qn}}{\Gamma(\lambda n+\beta)}\,\frac{z^{n}}{n!}=\frac{1}{\Gamma(\beta)}\,{}_qF_{\lambda}\left[\varDelta(q;\gamma);\,\varDelta(\lambda,\beta);\quad \frac{q^{q}z}{\lambda^{\lambda}}\right],
\end{equation}
where, $\varDelta(\lambda,\beta)$ is a q-tuple $\frac{\gamma}{q}$, $\frac{\gamma+1}{q}$,\ldots,$\frac{\gamma+q-1}{q}$. \\
In particular, we have
\begin{equation}\label{mittag-hyper2}
E_{\lambda,1}^{1,1}(z)=E_{\lambda}(z)=\sum_{n=0}^{\infty}\,\frac{z^{n}}{\Gamma(\lambda n+1)}={}_1F_{\lambda}\left[\varDelta(1;1);\,\varDelta(\lambda,1);\quad \frac{z}{\lambda^{\lambda}}\right].
\end{equation}
Now using \eqref{mittag-hyper2} in \eqref{nbeta}, we have
\begin{equation}\label{hyperrel1}
\aligned
B_{p,q}^{\lambda;\sigma,\tau}(\eta_{1},\eta_{2})=\int_{0}^{1}\,t^{\eta_{1}-1}&(1-t)^{\eta_{2}-1}
{}_1F_{\lambda}\left[\varDelta(1;1);\,\varDelta(\lambda,1);\quad \frac{1}{\lambda^{\lambda}}\left(-\frac{p}{t^{\sigma}}\right)\right]\\
\times &{}_1F_{\lambda}\left[\varDelta(1;1);\,\varDelta(\lambda,1);\quad \frac{1}{\lambda^{\lambda}}\left(-\frac{q}{(1-t)^{\tau}}\right)\right]\,dt,
\endaligned
\end{equation}
from which we can write
\begin{equation}\label{hyperrel2}
\aligned
B_{p,q}^{\lambda;\sigma,\tau}(\eta_{1},\eta_{2})=&\int_{0}^{1}\,\frac{u^{\eta_{1}-1}}{(1+u)^{\eta_{1}+\eta_{2}}}
{}_1F_{\lambda}\left[\varDelta(1;1);\,\varDelta(\lambda,1);\quad \frac{1}{\lambda^{\lambda}}\left(-\frac{p(1+u)^{\sigma}}{u^{\sigma}}\right)\right]\\
\times&{}_1F_{\lambda}\left[\varDelta(1;1);\,\varDelta(\lambda,1);\quad \frac{1}{\lambda^{\lambda}}\left(-q(1+u)^{\tau}\right)\right]\,du.
\endaligned
\end{equation}

\vspace{.35cm}
\item($Fox\,H$-function representation)\\
We obtain the following relation between $B_{p,q}^{\lambda;\sigma,\tau}(\eta_{1},\eta_{2})$ and  Fox\,H-function:
\begin{equation}\label{Hrel}
\aligned
 B_{p,q}^{\lambda;\sigma,\tau}(\eta_{1},\eta_{2})=&\int_{0}^{1}\,t^{\eta_{1}-1}\,(1-t)^{\eta_{2}-1}
H_{0,2}^{1,0}\left[\frac{p}{t^{\sigma}}\,\huge| \quad(0;1);\,(0,1),(0,\lambda)\right]\\
\times&H_{0,2}^{1,0}\left[\frac{q}{(1-t)^{\tau}}\,\huge| \quad(0;1);\,(0,1),(0,\lambda)\right]\,dt.
\endaligned
\end{equation}

\vspace{.35cm}
\item (Bessel-Maitland function representation)\\
By using the relation (see \cite{khan-kash}) $J_{0,1}^{\lambda,1}(z)=E_{\lambda}(z)$ and in view of \eqref{nbeta},
we can write
\begin{equation}
B_{p,q}^{\lambda;\sigma,\tau}(\eta_{1},\eta_{2})=\int_{0}^{1}\,t^{\eta_{1}-1}\,(1-t)^{\eta_{2}-1}\,J_{0,1}^{\lambda,1}(u)\,J_{0,1}^{\lambda,1}(v)\,dt,
\end{equation}
where, $u=\frac{p}{t^{\sigma}}$ ~\text{and}~$v=\frac{q}{(1-t)^{\tau}}$.
\end{itemize}

\vspace{.35cm}
\section{Discussion and Conclusion}
In the present paper, it appears to hold interest that the  extensions so obtained are very general in nature and by being specific with parameters, can yield previously defined beta, hypergeometric and other hypergeometric functions. Hence they become quite important from application perspective. We have also shown the connections of the generalized beta function $B_{p,q}^{\lambda;\sigma,\tau}$ with other special functions of mathematical physics, therefore, several generating functions involving generalized (and extended) forms of beta and hypergeometric functions will likely play an essentail role in theory of applied mathematics. We also remark that the generating relation obtained in  \eqref{gen-rel} is interesting due to the fact that several functions and polynomials, for in particular, Jacobbi and Laguerre polynomials can be expressed in terms of hypergeometric and other related functions.\\
The extent to which beta and hypergeometric functions and their generalizations have contributed in mathematical physics and other fields have been a constant source of knowledge and help for researchers.
\vspace{.35cm}
\begin{center}
Acknowledgement
\end{center}
The authors would like to thank the Science and Engineering Research Board (SERB), Department of Science and Technology, Government of India (GoI) for project under the Mathematical Research Impact Centric Support (MATRICES) with reference no. MTR/2017/000821 for this work.

\bigskip

\end{document}